\newtheorem{theorem}{Theorem}
\newtheorem{lemma}{Lemma}
\newtheorem{remark}{Remark}
\def\RR{$\lim\limits_{i\to \infty }{\frac{V_{i+1}}{V_{i}}}$}
\def\RS{$\lim\limits_{i\to \infty }{\frac{V_{i}}{S_{i}}}$}
\def\RrS{$\lim\limits_{i\to \infty }{\frac{V_{i+1}}{S_{i}}}$}
\def\SS{$\lim\limits_{i\to \infty }{\frac{S_{i+1}}{S_{i}}}$}
\def\bb{$\lim\limits_{i\to \infty }{\frac{b_{i+1}^d}{b_{i}^d}}$}
\def\balpha{\text{\boldmath $\alpha$}}
\newcommand*{\ADR}{University of Sopron,  Institute of Mathematics, Hungary. \textit{nemeth.laszlo@uni-sopron.hu}}
\newcommand*{\TIT}{The growing ratios of hyperbolic regular mosaics with bounded cells}
\title{\bf \TIT}
\author{L\'aszl\'o N\'emeth\footnote{\ADR}}
\date{}
\begin{document}

\title{\bf The growing ratios of hyperbolic regular mosaics with bounded cells}

\maketitle

\begin{abstract}
    In 3- and 4-dimensional hyperbolic spaces there are four, respectively five, regular mosaics with bounded cells. A belt can be created around an arbitrary base vertex of a mosaic. The construction can be iterated and a growing ratio can be determined by using the number of the cells of the considered belts. In this article we determine these growing ratios for each mosaic in a generalized way. \\[1mm]
{\em Key Words: hyperbolic mosaic, regular mosaic, regular tessalation, honeycomb.}\\
{\em MSC code:  52C22, 52B11, 05B45.} 

    The final publication is available at Armenian Journal of Mathematics via \\http://www.flib.sci.am/eng/journal/Math/. 
\end{abstract}

\section*{Introduction}\label{introduction}

{\sc Fejes T\'oth, L.} \cite[p.261.]{fej} examined the area of circles with common centre in the
following way. Let $C(r)$ be the area enclosed by the circle of radius $r$. If $a>0$, then $\lim\limits_{r\to \infty }
{\frac{C(r+a)-C(r)}{C(r)}}$ is equal to $0$ in the Euclidean plane but it is $e^a-1$ in the hyperbolic plane.
This fact inspired several mathematicians to deal with the structure of hyperbolic space (\cite{hor}, \cite{kar}, \cite{nem2}, \cite{ver2}, \cite{zei}).


In the following we examine two similar limits for $d$-dimensional hyperbolic regular mosaics
${\cal H}$ (regular tessellations or honeycombs \cite{cox}). In $d$-dimensional spaces  the Schl\"afli symbol of a regular mosaic is
$\{n_1,n_2,\dots ,n_d\}$, where $\{n_1,n_2,\dots ,n_{d-1}\}$ is the cell ${\cal P}$
({\sc Coxeter} \cite{cox}).

Let us fix a cell ${\cal P}$ (or a vertex) as the belt 0. Denote it by $B_0$ and create belts around it. The
first belt consists of the cells of the mosaic having common (finite) points with $B_0$. They are neighbours of ${\cal P}$.
If the belt $i$ is known, let the belt $(i+1)$ consist of the cells that have a common (finite) point
(not necessarily a common vertex) with the belt $i$, but have no common point with the belt $(i-1)$.

Let $V_i$ denote the volume of the belt $i$ and $S_i=\sum_{j=0}^iV_j$. 
We call the limit \RR\  the \emph{growing ratio} or \emph{crystal-growing ratio}. The definition was suggested by {\sc I. Vermes} \cite{ver2}. 
The often examined limit \RS\ also gives information about the growing of regular mosaics. 

{\sc K\'arteszi} \cite{kar} examined the mosaics with regular triangles $\{3,m\}$ in the hyperbolic plane. He
took a triangle as the belt~$0$ and constructed the other belts around it. He calculated that \RS\
$=\frac{\sqrt{(m-4)^2-4}-(m-6)}{2}$,  $(m>6)$.  {\sc Horv\'ath} \cite{hor} showed for all regular mosaics
$\{p,q\}$ in the hyperbolic plane, that  \RS\ $=\frac{\sqrt{c^2-4}-(c-2)}{2}$, where $c>2$ and
$c=(p-2)(q-2)-2$. {\sc Vermes}  \cite{ver2} gave this limit for mosaics with asymptotic
polygons in the hyperbolic plane. {\sc Zeitler} \cite{zei} determined for the mosaic $\{4,3,5\}$ in 3-dimensional hyperbolic space, that  \RS\ $=4\sqrt{14}-14$ and \RR\
$=15+4\sqrt{14}$. The author \cite{nem2} determined, that the \RR\,$=$\SS\,$\approx2381.8$ and
\RS\,$\approx0.9996$ in case of 4-dimensional mosaic $\{4,3,3,5\}$ and $\{5,3,3,4\}$ and calculated the limits for eight further
mosaics with unbounded cells  (\cite{nem1}).

\textsc{Coxeter} \cite{cox} examined the higher dimensional hyperbolic regular mosaics  too. He proved, that in 3-dimensional hyperbolic space the  regular mosaics with bounded cells
are the mosaics $\{3,5,3\}$, $\{4,3,5\}$, $\{5,3,4\}$ and $\{5,3,5\}$, all the others have 
unbounded cells (see \cite{Martelli}). In 4-dimensional hyperbolic space the mosaics 
$\{3,3,3,5\}$, $\{4,3,3,5\}$,
$\{5,3,3,5\}$, $\{5,3,3,4\}$, $\{5,3,3,3\}$ have  bounded cells. In higher dimensional
hyperbolic spaces there is no regular mosaic  with bounded cells.

Let us take a regular $d$-dimensional polyhedron, as a cell of a mosaic. Consider the middle point of the cell, a middle point of a $(d-1)$-dimensional face, a middle point of its $(d-2)$-dimensional face, and so on, finally a vertex of the last edge, and all these points determine a simplex as a characteristic simplex (fundamental domain) of the regular polyhedron. The polyhedron and the regular mosaic are generated by reflections in facets ($(d-1)$-dimensional hyperfaces) of a characteristic simplex.   
(More detailed definition is given in the next section (and in \cite{Martelli,vin})).

In this article we are going to determine the limits defined above in a general way for all the hyperbolic regular mosaics  with bounded cells. Since the examined regular mosaics are bounded,  the volumes of all the cells are finite. Thus, if $r_i$ is the number of the cells in the belt~$i$ and $s_i=\sum_{j=0}^ir_j$, then we can simplify  the limits to  \RR$=\lim\limits_{ i\to \infty }{\frac{r_{i+1}}{r_i}}$ and  \RS$=\lim\limits_{ i\to \infty }{\frac{r_{i}}{s_i}}$.
The method consists of three main steps using characteristic simplices  of the cells. 
First, we take the characteristic simplices  of the mosaics and we construct  belts  with characteristic simplices around their vertices and we examine these simplex-belts.
Second, we form belts with cells around the middle points of  $l$-dimensional faces of a cell and we examine the growing. 
Finally, we take the belt $(i+1)$, defined above, as the union of the belts 1 around the vertices of belt~$i$. This yields to a system of linear recurrence sequences with a coefficient matrix $\mathbf{M}$ for each regular mosaic, where the recurrence sequences give the numbers  of the different dimensional vertices of the characteristic simplices in the belts. (See the exact definition later.)
Theorem \ref{tetel:algebrai_pqr} and Table \ref{tablazat:szabalyos_mozaikok veges1} show the final results in 3- and 4-dimensional hyperbolic spaces; of these results, the ratios for the mosaics  $\{3,5,3\}$,  $\{5,3,4\}$, $\{5,3,5\}$,  $\{3,3,3,5\}$,   $\{5,3,3,3\}$ and  $\{5,3,3,5\}$ are new. The other results are of course known (\cite{hor}, \cite{kar}, \cite{nem2}, \cite{zei}), but are found here by a general method.  

\begin{theorem}[Main theorem]\label{tetel:algebrai_pqr}
For the hyperbolic regular mosaics  with bounded cells \\$\{n_1,n_2,\dots ,n_d\}$ 
the growing ratios \RR\ $=z_1$ and limits \RS\ $=\frac{z_1-1}{z_1}$, where $z_1$ is the largest eigenvalue of the coefficient matrix $\mathbf{M}$ of the corresponding system of linear recurrence sequences determined by the regular mosaic. 
\end{theorem}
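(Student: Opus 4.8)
The plan is to reduce the geometric statement to a question about the asymptotics of a nonnegative linear recurrence and then invoke Perron--Frobenius theory. First I would use the fact, already noted in the excerpt, that all cells are bounded and congruent, so their common volume $v$ satisfies $V_i = v\,r_i$ and $S_i = v\,s_i$; hence $\frac{V_{i+1}}{V_i}=\frac{r_{i+1}}{r_i}$ and $\frac{V_i}{S_i}=\frac{r_i}{s_i}$, and it suffices to analyse the integer sequences $r_i$ and $s_i=\sum_{j=0}^i r_j$.

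Next I would record the output of the three-step construction as a single vector recurrence. Writing $\mathbf{v}_i$ for the column vector whose entries count the characteristic-simplex vertices of each type (centres of $0,1,\dots,d$-dimensional faces) occurring in belt $i$, the third step furnishes a fixed nonnegative integer matrix $\mathbf{M}$ with $\mathbf{v}_{i+1}=\mathbf{M}\,\mathbf{v}_i$, while the cell count is a fixed nonnegative linear functional $r_i=\mathbf{c}^{\top}\mathbf{v}_i$ of this vector (each cell carrying a constant number of characteristic simplices). Thus $\mathbf{v}_i=\mathbf{M}^i\mathbf{v}_0$ and everything reduces to the growth of the powers $\mathbf{M}^i$.

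The heart of the argument is Perron--Frobenius. Since $\mathbf{M}$ has nonnegative entries, its spectral radius $z_1$ is an eigenvalue with a nonnegative eigenvector. I would verify, for each mosaic (or uniformly, by exhibiting a power $\mathbf{M}^k$ with strictly positive entries), that $\mathbf{M}$ is primitive; then $z_1$ is real, positive, simple, and strictly dominant in modulus, with strictly positive left and right Perron eigenvectors $\mathbf{u}$ and $\mathbf{w}$. Decomposing $\mathbf{v}_0$ spectrally gives $\mathbf{v}_i=z_1^{\,i}\bigl(\alpha\,\mathbf{w}+o(1)\bigr)$ with $\alpha=\frac{\mathbf{u}^{\top}\mathbf{v}_0}{\mathbf{u}^{\top}\mathbf{w}}>0$, the positivity coming from $\mathbf{v}_0\ge 0$, $\mathbf{v}_0\ne 0$ and $\mathbf{u}>0$. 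Applying $\mathbf{c}^{\top}$ yields $r_i=C\,z_1^{\,i}\bigl(1+o(1)\bigr)$ with $C=\alpha\,\mathbf{c}^{\top}\mathbf{w}>0$, whence $\frac{r_{i+1}}{r_i}\to z_1$.

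Finally, the second limit follows by a geometric-series computation: since $r_j=C z_1^{\,j}(1+o(1))$ and $z_1>1$, summation gives $s_i=C\,\frac{z_1^{\,i+1}-1}{z_1-1}(1+o(1))$, so
\[
\frac{r_i}{s_i}=\frac{z_1^{\,i}(z_1-1)}{z_1^{\,i+1}-1}\,(1+o(1))\longrightarrow \frac{z_1-1}{z_1}.
\]
I expect the main obstacle to be the verification of the Perron--Frobenius hypotheses: that the combinatorially defined matrix $\mathbf{M}$ is genuinely primitive, that $z_1>1$ (needed both for the growth interpretation and for the geometric sum), and that step three really produces an \emph{exact} linear recurrence, i.e.\ that decomposing belt $(i+1)$ into belts~$1$ around the vertices of belt~$i$ leaves no surviving overcounting along shared boundaries. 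Once irreducibility and aperiodicity are established, the remainder is the routine spectral asymptotics displayed above.
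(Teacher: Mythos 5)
Your argument rests on a premise that is false for the matrix the construction actually produces: $\mathbf{M}$ is \emph{not} nonnegative. The decomposition of belt $(i+1)$ into copies of belt $1$ around the vertices of belt $i$ overcounts points on shared boundaries, and the paper repairs this by inclusion--exclusion (the alternating sums defining $g_x^y$ and the relation $w_{i+1}^y=\sum_{j}(-1)^j w_i^j g_j^y$), which forces alternating signs into the coefficient matrix: $\mathbf{M}=\mathbf{G}^T\mathrm{diag}\bigl(1,-1,1,\dots,(-1)^d\bigr)$. Every instance of $\mathbf{M}$ in the paper has negative columns (e.g.\ for $\{4,3,5\}$ the second and fourth columns are negative throughout), so $\mathbf{M}$ has no Perron--Frobenius structure: ``primitivity'' is undefined for it, and you cannot conclude that the spectral radius is a real, simple, strictly dominant eigenvalue with positive left and right eigenvectors, nor deduce positivity of the leading coefficient from $\mathbf{u}>0$. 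The overcounting issue you flag at the very end as a residual technicality is in fact fatal to your main tool: eliminating the overcounting is exactly what destroys nonnegativity. (A smaller discrepancy: the exact recurrence holds for the cumulative vectors, $\mathbf{w}_{i+1}=\mathbf{M}\mathbf{w}_i$ with $\mathbf{v}_{i+1}=(\mathbf{M}-\mathbf{I})\mathbf{w}_i$, not in the form $\mathbf{v}_{i+1}=\mathbf{M}\mathbf{v}_i$; on its own this would be harmless.)

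The paper's route avoids positivity theory altogether. It uses Cayley--Hamilton to show that any scalar sequence $r_i=\boldsymbol{\alpha}^T\mathbf{w}_i$ (in particular the cell count $r_i=b_i^d$) satisfies the linear recurrence whose characteristic polynomial is that of $\mathbf{M}$; then two elementary lemmas give $r_{i+1}/r_i\to z_1$ and $r_i/s_i\to(z_1-1)/z_1$ \emph{under the hypotheses} that the roots are real, $z_1$ is strictly dominant with $|z_1|>1$, and the coefficient $g_1$ of $z_1^i$ in the explicit solution is nonzero. Those hypotheses are not derived from a structural theorem; they are verified case by case --- symbolically for the planar family $\{p,q\}$, and by explicit computation (eigenvalues and the numerical values of $g_1$, obtained with Maple) for each of the finitely many bounded-cell mosaics in dimensions $3$ and $4$. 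Your spectral asymptotics and the geometric-series computation at the end are fine as far as they go, but without a substitute justification of the spectral hypotheses --- say, a theorem on eventually positive matrices, or the explicit per-mosaic verification the paper carries out --- your proof has a genuine gap at its central step.
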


Now, we give the exact values of the limits for hyperbolic regular mosaics  based on Theorem~\ref{tetel:algebrai_pqr}. The decimal approximations can be seen in Table \ref{tablazat:szabalyos_mozaikok veges1}.

\begin{enumerate}

\item[]\hskip -1cm \emph{\{$p,q$\}}\emph{:} \RR\ $=\frac{pq}{2}-p-q+1+\frac{\sqrt{(q-2)(p-2)(pq-2p-2q)}}{2}=\frac{c+\sqrt{c^2-4}}{2}$ 
and \\ \RS\ $=\frac{\sqrt{c^2-4}-(c-2)}{2}$, where $c=(p-2)(q-2)-2$.
\item[]\hskip -1cm  \emph{\{4,3,5\}} and \emph{\{5,3,4\}}\emph{:} 
         \RR\ $=15+4\sqrt{14}$ and
           \RS\ $=\frac{14+4\sqrt{14}}{15+4\sqrt{14}}={4\sqrt{14}-14}$.
\item[]\hskip -1cm \emph{\{5,3,5\}}\emph{:}  \RR\ $=\frac{167}{2}+\frac{13}{2}\sqrt{165}$ and
            \RS\ $=\frac{165+13\sqrt{165}}{167+13\sqrt{165}}=\frac{13}{2}\sqrt{165}-\frac{165}{2}$.
\item[]\hskip -1cm \emph{\{3,5,3\}}\emph{:}  \RR\ $=\frac{47}{2}+\frac{21}{2}\sqrt{5}$ and
            \RS\ $=\frac{45+21\sqrt{5}}{47+21\sqrt{5}}=\frac{21}{2}\sqrt{5}-\frac{45}{2}$.
           
\item[]\hskip -1cm \emph{\{3,3,3,5\}} and \emph{\{5,3,3,3\}}\emph{:}  \RR\ $=22+\sqrt{401}+c$ and
            \RS\ $=\frac{21+\sqrt{401}+c}{22+\sqrt{401}+c}$  ${=-21-\sqrt{401}+c}$, where $c=2\sqrt{221+11\sqrt{401}}$.        
\item[]\hskip -1cm \emph{\{4,3,3,5\}} and \emph{\{5,3,3,4\}}\emph{:}  \RR\ $=\frac12({1195+11\sqrt{11641}+c})$ and \\
            \RS\ $=\frac12({-1193-11\sqrt{11641}+c})$, where
             $c=\sqrt{2836582+26290\sqrt{11641}}$.
\item[]\hskip -0.9cm \emph{\{5,3,3,5\}}\emph{:}  \RR\ $=79876+3135\sqrt{649}+c$ and                         \RS\ $=-79875-3135\sqrt{649}+c$,  where $c=2\sqrt{3189673350+125205630\sqrt{649}}$.

\end{enumerate}

\begin{table}[!ht]
  \centering
\begin{tabular}{|c|c|c|}
  \hline
    mosaic & \quad \quad \RR \quad \quad & \quad \quad \RS \quad \quad \\
  \hline \hline
  $\{4,3,5\}$, $\{5,3,4\}$  &   29.9666  &  0.9666  \\ \hline
  $\{3,5,3\}$               &  46.9787  &  0.9787  \\ \hline  $\{5,3,5\}$               &  166.9940  &  0.9940  \\ \hline
 \hline
  $\{3,3,3,5\}$,   $\{5,3,3,3\}$  &  84.0381  &  0.9881     \\ \hline
 $\{4,3,3,5\}$,   $\{5,3,3,4\}$  &  2381.8277  &  0.9996   \\ \hline
 $\{5,3,3,5\}$                &  319483.2496  &  0.999997  \\ \hline
\end{tabular}
\caption{{Limits in the case of hyperbolic mosaics with bounded cells.}\label{tablazat:szabalyos_mozaikok veges1}}
\end{table}

\section{Definitions}\label{definition}

We take a regular mosaic $\{n_1,n_2,\dots ,n_d\}$. An \emph{$x$-point} (or \emph{$x$-vertex}) is  the centre of an $x$-dimensional face ($x$-face) of the regular polyhedron $\{n_1,n_2,\dots,$ $n_{d-1}\}$. 

We consider   0-, 1-, 2-, \dots\  and $d$-points, where  $d$-point is the centre of a cell,  $(d-1)$-point is the centre of its $(d-1)$-face,  $(d-2)$-point is the centre of its $(d-2)$-face, and similarly  $x$-point is the centre of an $x$-face of the $(x+1)$-face ($x=0,1,\ldots d-1$). 

Let a characteristic simplex of a cell be the simplex, defined by the vertices 0-, 1-, 2-, \dots\  and $d$-points, where  $d$-point is the centre of the cell,  $(d-1)$-point is the centre of its $(d-1)$-face, and similarly $x$-point is the centre of an $x$-face of the $(x+1)$-face ($x=0,1,\ldots d-1$) (see \cite{Martelli,vin}).
The reflections in facets ($(d-1)$-dimensional hyperfaces) of a characteristic simplex generate not only cells, but also a regular mosaic. So, it is not only the characteristic simplex of a cell, but also of the mosaic. 
Let us denote a characteristic simplex by $\varDelta$.  
Figure~\ref{abra:hatszog_pq} shows a part of mosaic $\{p,q\}=\{6,3\}$ and a characteristic simplex (triangle). There are around a 2-point $2p$ and around a 0-point $2q$ characteristic triangles. In Figure~\ref{abra:szimplex_pqr} the characteristic simplices of the mosaics $\{p,q,r\}=\{4,3,4\}$ and $\{4,3,5\}$ can be seen. The mosaic $\{4,3,4\}$ is the well-known Euclidean cube mosaic. Around each 23-edge, 30-edge and 01-edge there are $2p$, $2q$, $2r$ characteristic simplices, respectively. Around the other edges there are always 4 simplices.

\begin{figure}[!htb]
		\centering
 \includegraphics{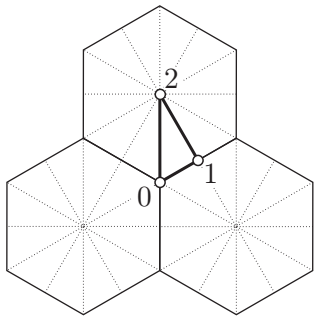} 
  \caption{{The characteristic simplex $\varDelta$ of the mosaic $\{6,3\}$.}}
  \label{abra:hatszog_pq}
\end{figure}

\begin{figure}[!htb]
	\centering
\includegraphics[width=0.95\linewidth]{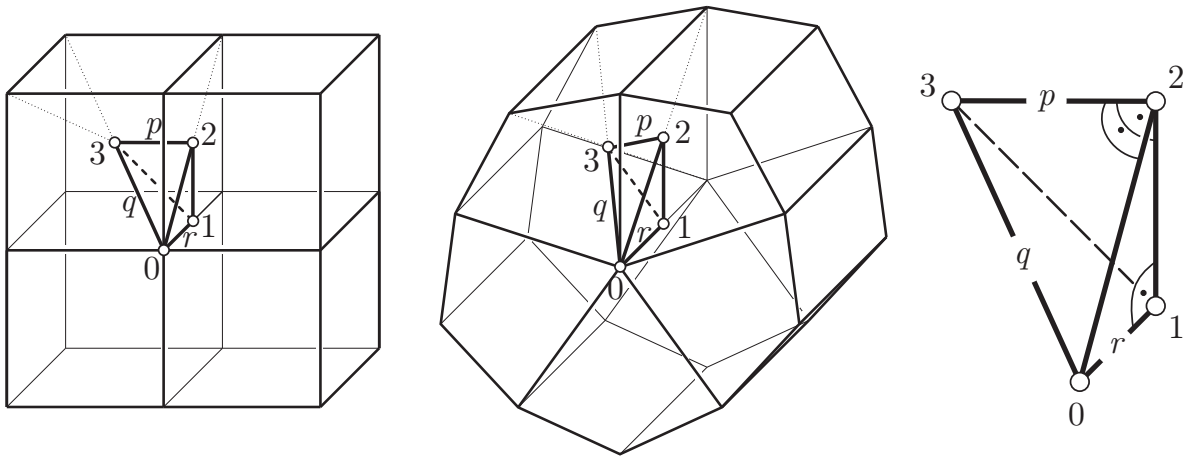}
 \caption{{The characteristic simplices $\varDelta$  of the mosaics $\{p,q,r\}=\{4,3,4\}$ and $\{4,3,5\}$.}} 
 \label{abra:szimplex_pqr}
\end{figure}

We now define several sets needed in what follows. Generally, the capital letters denote sets, the lower-case letters the number of their elements and the bold letters the vectors.

\begin{tabular}{r p{11.0cm}}
$K_x$ :& the set whose elements are the characteristic simplices having a common $x$-point. (Then the union of the elements of $K_d$ is a cell. {Figure~\ref{abra:szimplex_k2}} shows a $K_2$ of the mosaics $\{4,3,4\}$ and  $\{4,3,5\}$).\\
$K_x^y$ :& the set of  $y$-points of $K_x$.\\
$k_x^y$ :& the number of $y$-points of $K_x$ ($k_x^y =\left| K_x^y \right|$, $k_x^x=1$).
\end{tabular}

\begin{tabular}{r p{11.0cm}}
$G_x$ :& the set whose elements are the cells having a common $x$-point. (Figure~\ref{abra:hatszog_pq} shows a $G_0$ and Figure~\ref{abra:szimplex_pqr} shows two $G_1$s. Their cells have a common vertex or common edges, respectively.)\\
$G_x^y$ :& the set of  $y$-points of $G_x$. \\
$g_x^y$ :& the number of  $y$-points of $G_x$ ($ g_x^y = \left| G_x^y \right| $). 
Then $g_x^d = \left| G_x^d \right|$ is the number of the cells having a common $x$-point.
\end{tabular}

\begin{tabular}{r p{11.0cm}}
$B_i$ :& the belt $i$; the set whose elements are the cells in the belt~$i$.\\
$B_i^y$ :& the set of  $y$-points of the belt $i$. \\
$b_i^y$ :& the number of $y$-points of the belt $i$. ($b_i^d$ is the number of the cells of the belt~$i$.)
\end{tabular}

\begin{tabular}{r p{11.0cm}}
$W_i$ :& the union of belts  0, 1, \ldots , $i$  ($W_i= \bigcup\limits_{j=0}^i B_j$).\\
$W_i^y$ :& the set of  $y$-points of $W_i$ ($W_i^y = \bigcup\limits_{j=0}^i B_j^y$).\\
$w_i^y$ :&  the number of  $y$-points of $W_i$  ($w_i^y=|W_i^y|= \sum\limits_{j=0}^i b_j^y$).  \\
$\mathbf{w}_{i}$  :& $\mathbf{w}_{i}:= (w^0_i\ w^1_i\ \dots \ w^d_i)^{T}$.\\
$\mathbf{v}_{i}$ :& $\mathbf{v}_{i}:=(b^0_i\ b^1_i\ \dots \ b^d_i)^{T}$, then $\mathbf{v}_{i+1}=\mathbf{w}_{i+1}-\mathbf{w}_{i}$. For example the number of the cells of the belt $(i+1)$ is $b_{i+1}^d=w_{i+1}^d-w_{i}^d$.
\end{tabular}

\begin{figure}[!htb]
		\centering
 \includegraphics{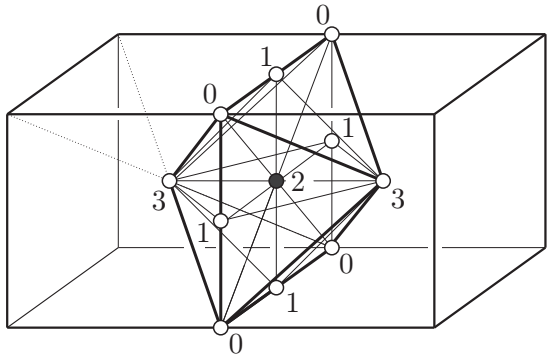} 
\caption{{$K_2$ of the mosaics $\{4,3,4\}$ and $\{4,3,5\}$.}}
\label{abra:szimplex_k2}
\end{figure}

\medskip
Let the belt~$0$ be a cell, then $\mathbf{w}_{0}=\mathbf{v}_{0}=(k_d^0\ k_{d}^{1}\ \ldots \ k_d^{d-1} \ k_d^d)^{T}$  (if the belt $0$ is a vertex, then  $\mathbf{w}_{0}=\mathbf{v}_{0}=(1\ 0 \ldots 0\  0)^{T}$).
The belt~$0$ also could be an $l$-dimensional face, then  $\mathbf{w}_{0}=\mathbf{v}_{0}=(k_l^0\ k_{l}^{1}\ \ldots \ k_l^{l-1} \ k_l^l \ 0 \ldots \ 0)^{T}$.

\medskip
Let $v$ denote the volume of a cell of a mosaic. Then \RR\  $=\lim\limits_{i\to \infty}{\frac{v\cdot{b_{i+1}^d}}  {v\cdot{b_i^d}}}$ = \bb\ 
and \RS\ $=\lim\limits_{i\to \infty } {\frac{v\cdot b_i^d}{v\cdot\sum\limits_{j=0}^ib_j^d}}$ $=\lim\limits_{i\to \infty}\frac{b_i^d}{\sum\limits_{j=0}^ib_j^d}$ $=\lim\limits_{i\to \infty}\frac{b_i^d}{w_i^d}$ ($i \ge 1$). Thus we can calculate the limits by considering the numbers of the cells instead of the volumes.

\section{Belts with characteristic simplices}

We create the matrix $\mathbf{K}$ with elements $k_x^y$ in  the following way:
\begin{eqnarray*}
\mathbf{K}=
 \begin{pmatrix}
  k_0^0 & k_0^1 & \dots & k_0^d\\
  k_1^0 & k_1^1 & \dots & k_1^d\\
  \hdotsfor{4}\\
  k_d^0 & k_d^1 & \dots & k_d^d
 \end{pmatrix}.
\end{eqnarray*}
The definition of $k_x^y$ enables us to give the matrix $\mathbf{K}$ in another way
\begin{eqnarray}\label{eq:K}
\mathbf{K}
&=&
(\textrm{the number of  $y$-faces connecting to an $x$-face})\nonumber\\
&=&
 \left( \begin{smallmatrix}
  1 & N_0\{n_2,\dots ,n_d\} & N_1\{n_2,\dots ,n_d\} & \dots & N_{d-1}\{n_2,\dots ,n_d\}\\
  2 & 1 & N_0\{n_3,\dots ,n_d\} & \dots & N_{d-2}\{n_3,\dots ,n_d\}\\
  N_0\{n_1\} & N_1\{n_1\}& 1& \cdots & N_{d-3}\{n_4,\dots ,n_d\}\\
  N_0\{n_1,n_2\}& N_1\{n_1,n_2\}& N_2\{n_1,n_2\}&  \cdots & N_{d-4}\{n_5,\dots ,n_d\}\\
  \hdots & \hdots &\hdots &\hdots &\hdots \\
 N_0\{n_1,n_2,\dots ,n_{d-2}\} & N_1\{n_1,n_2,\dots ,n_{d-2}\} &
            N_2\{n_1,n_2,\dots ,n_{d-2}\} & \dots & 2 \\
  N_0\{n_1,n_2,\dots ,n_{d-1}\} & N_1\{n_1,n_2,\dots ,n_{d-1}\} &
            N_2\{n_1,n_2,\dots ,n_{d-1}\} & \dots & 1
 \end{smallmatrix}\right),
\end{eqnarray}
\noindent where $N_{l}\{m_1,m_2,\dots ,m_j\}$ denotes the number of  $l$-dimensional faces of the regular polyhedron
$\{m_1,m_2,\dots ,m_j\}$.

The $k_x^y$ also give  the number of  $y$-dimensional faces incident to an $x$-dimensional face. 


\medskip

For a row $l$ of $\mathbf{K}$, we have the next lemma.
\begin{lemma}\label{lemma:euler} Properties of the matrix  $\mathbf{K}$, $(l\leq d)$.
\begin{enumerate}
\item [i.] $\sum\limits^{l}_{j=0}  {(-1)}^{j}k_l^j=1$.
\item [ii.] $\sum\limits^{d}_{j=l}  {(-1)}^{j}k_l^j=(-1)^d$.
\item [iii.] $\sum\limits^d_{j=l}  {(-1)}^{d-j}k_l^j=1$.
\item [iv.] $\sum\limits^d_{j=0}  {(-1)}^{j}k_l^j=1-(-1)^l+ (-1)^{d}$.
\end{enumerate}
\end{lemma}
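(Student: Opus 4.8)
The plan is to read each row of $\mathbf{K}$ as a pair of $f$-vectors and to recognise all four identities as instances of the Euler--Poincar\'e relation, applied once to a polytope and once to a sphere. First I would record the geometric meaning of the entries in row $l$. For $j\le l$ the number $k_l^j=N_j\{n_1,\dots,n_{l-1}\}$ is the number of $j$-faces of the $l$-dimensional cell $\{n_1,\dots,n_{l-1}\}$ (with $k_l^l=1$ counting the cell itself). For $j\ge l$ the entry $k_l^j=N_{\,j-l-1}\{n_{l+2},\dots,n_d\}$ is the number of $j$-faces of the mosaic incident to a fixed $l$-face $F$: the faces of the mosaic containing $F$, ordered by inclusion, form the face complex of the link of $F$, which is the boundary complex of the spherical polytope $\{n_{l+2},\dots,n_d\}$ of dimension $d-l-1$. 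Under this identification a $j$-face through $F$ corresponds to a $(j-l-1)$-face of that polytope, and $k_l^d=1$ again records its top element.

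With this dictionary, part~i is the Euler--Poincar\'e formula for the $l$-polytope $\{n_1,\dots,n_{l-1}\}$: since $\sum_{j=0}^{l-1}(-1)^jN_j\{n_1,\dots,n_{l-1}\}=1-(-1)^l$ for any $l$-polytope and $k_l^l=1$, adding the top term gives $\sum_{j=0}^l(-1)^jk_l^j=1$. For part~iii I would apply the Euler relation of the $(d-l-1)$-sphere to the link of $F$, namely $\sum_{i=0}^{d-l-1}(-1)^iN_i\{n_{l+2},\dots,n_d\}=1+(-1)^{d-l-1}$, and reindex by $i=j-l-1$; collecting the sign $(-1)^{d-j}=(-1)^{d-l-1-i}$ together with the separate term $k_l^l=1$, the two extreme sign-terms cancel and the sum telescopes to $\sum_{j=l}^d(-1)^{d-j}k_l^j=1$. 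The remaining two identities are then purely formal: multiplying part~iii by $(-1)^d$ and using $(-1)^{d-j}=(-1)^d(-1)^j$ yields part~ii, and splitting $\sum_{j=0}^d$ at $j=l$, invoking parts~i and~ii and subtracting the doubly counted term $(-1)^lk_l^l=(-1)^l$, yields part~iv, $\sum_{j=0}^d(-1)^jk_l^j=1-(-1)^l+(-1)^d$.

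The main obstacle is the justification of the co-face dictionary in the first paragraph --- that the faces of the mosaic incident to a fixed $l$-face are exactly the faces of a $(d-l-1)$-dimensional spherical polytope, so that the sphere Euler relation applies with the correct alternating signs. Once this is granted (it follows from the local finiteness of the tiling, which makes every face link a sphere, together with the recursive reading of the Schl\"afli symbol), the rest is sign bookkeeping. I would check the boundary cases separately to confirm the conventions: $l=d$ (empty link, every sum collapsing to the single cell term $(-1)^d$ or $1$) and $l=d-1$ (link $=S^0$, i.e.\ the two cells sharing a facet, so $k_{d-1}^d=2$).
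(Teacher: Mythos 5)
Your proof is correct and follows essentially the same route as the paper: both read row $l$ of $\mathbf{K}$ as the $f$-vector of the $l$-face $\{n_1,\dots,n_{l-1}\}$ (for $j\le l$) and of the co-face polytope $\{n_{l+2},\dots,n_d\}$ (for $j\ge l$), apply the Euler--Poincar\'e relation to each, and obtain the remaining identities by sign manipulation and splitting the sum at $j=l$. The only cosmetic difference is ordering --- you prove i and iii directly and deduce ii formally, whereas the paper proves i and ii directly and deduces iii --- and note that the co-face dictionary you were concerned about justifying is already built into the paper's identification of $\mathbf{K}$ in its equation \eqref{eq:K}, so no extra argument is required there.
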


\begin{proof}
We use the Euler-theorem for an $n$-dimensional polyhedron, which is
$ \sum\limits^{n-1} _{j=0}(-1)^jN_j = 1-(-1)^n,$ where ${N_j}$ is the number of $j$-dimensional faces of a polyhedron.
\begin{enumerate}
\item [\emph{i.}] The first $l$ elements of the row $l$ are the numbers of the facets of an $l$-dimensional polyhedron. So
\begin{eqnarray*}
\sum\limits^{l}_{j=0}  {(-1)}^{j}k_l^j&=& k_l^0- k_l^{1}+\cdots +(-1)^{j}k_l^{j}+ \cdots +(-1)^{l}k_l^{l}\\
        &=& N_{0} - N_{1}+ \cdots +(-1)^{j}N_{j}+ \cdots +(-1)^{(l-1)}N_{l-1} +(-1)^{l}\cdot 1 \\
        &=& 1-(-1)^{l} +(-1)^{l} =1.
\end{eqnarray*}

\item [\emph{ii.}] The last $l$ elements of the row $l$ are the numbers of the facets of an $(d-l)$-dimensional polyhedron. So
\begin{multline*}
 \sum\limits^{d}_{j=l}  {(-1)}^{j}k_l^j = {(-1)}^l k_l^l+ {(-1)}^{l+1} k_l^{l+1}+\cdots +(-1)^{j}k_l^{j}+ \cdots\\ +(-1)^{d}k_l^{d}
        = (-1)^l + {(-1)}^{l+1} N_{0} +{(-1)}^{l+2}N_{1}+ \cdots\\ +(-1)^{j}N_{(j-l)-1}+ \cdots +(-1)^{d}N_{(d-l)-1}
        = (-1)^{l+1} \Big(-1+  N_{0} -N_{1}+ \cdots\\ +(-1)^{j-(l+1)}N_{(j-l)-1}+ \cdots +(-1)^{d-(l+1)}N_{(d-l)-1} \Big) \\
        = (-1)^{l+1} \Big(-1+(1-(-1)^{d-l})\Big)\\= -(-1)^{d-l+l+1}=(-1)^{d+2}=(-1)^{d}.
 \end{multline*}
\item [\emph{iii.}]
\begin{eqnarray*}
\sum\limits^d_{j=l}  {(-1)}^{d-j}k_l^j &=& (-1)^{d}\sum\limits^d_{j=l} {(-1)}^{-j}k_l^j = (-1)^{d}\sum\limits^d_{j=l}  {(-1)}^{j}k_l^j \\
                &=& (-1)^{d}\cdot (-1)^{d} = 1.
\end{eqnarray*}
\item [\emph{iv.}]
\begin{eqnarray*}
\sum\limits^d_{j=0}  {(-1)}^{j}k_l^j &=& \sum\limits^{l}_{j=0}  {(-1)}^{j}k_l^j - (-1)^l k_l^l+  \sum\limits^{d}_{j=l}  {(-1)}^{j}k_l^j\\
                &=& 1-(-1)^l+ (-1)^{d}.\ \ 
\end{eqnarray*}
\end{enumerate}
\end{proof}

\begin{lemma} A $K_x$ consists of $k_x^dk_d^{d-1}\cdots k_2^1k_1^0$ pieces of characteristic simplices. A cell has $k_d^{d-1}\cdots k_2^1k_1^0$ characteristic simplices.
\end{lemma}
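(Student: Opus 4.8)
The plan is to establish the per-cell count first, since it is the building block for the count of $K_x$. For a single cell I would identify each characteristic simplex with a complete flag of faces $F_0\subset F_1\subset\cdots\subset F_{d-1}\subset F_d$, where $F_j$ is the $j$-face whose centre is the $j$-point of the simplex and $F_d$ is the cell itself; this is a bijection, since by definition the $d+1$ vertices of a characteristic simplex are exactly the centres of the faces of such a flag, and conversely each flag yields a unique simplex. I would then count the flags by descending from the cell: $F_d$ is fixed, there are $k_d^{d-1}$ facets $F_{d-1}$, then $k_{d-1}^{d-2}$ facets $F_{d-2}$ of that facet, and so on down to $k_1^0=2$ vertices $F_0$ of the edge $F_1$. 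By regularity the number of facets of a $j$-face is the constant $k_j^{j-1}$ regardless of which $j$-face was selected, so the successive choices are independent and every flag is produced exactly once; hence a cell contains $k_d^{d-1}k_{d-1}^{d-2}\cdots k_2^1k_1^0$ characteristic simplices.

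For $K_x$ I would first read off from the matrix $\mathbf K$ that a fixed $x$-point is incident to exactly $k_x^d$ cells, this being the number of $d$-faces incident to an $x$-face (equivalently $g_x^d=k_x^d$, the number of cells of $G_x$). The set $K_x$ consists of the characteristic simplices of precisely these $k_x^d$ cells. As every characteristic simplex of the mosaic lies in a unique cell and the $k_x^d$ cells are pairwise distinct, $K_x$ is their disjoint union, so its cardinality is $k_x^d$ times the per-cell count just found, namely $k_x^d\,k_d^{d-1}\cdots k_2^1k_1^0$.

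The only delicate point is the flag bijection underlying the single-cell count: one must verify that a characteristic simplex and a complete flag $F_0\subset\cdots\subset F_d$ encode the same data, and that the descending facet-choices are genuinely independent with multiplicities $k_j^{j-1}$, which rests on the regularity of the cell. Once this is secured, the first assertion follows immediately by partitioning $K_x$ according to the cell containing each simplex and summing the identical per-cell contributions over the $k_x^d$ incident cells.
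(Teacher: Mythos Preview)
Your approach is essentially the paper's: count the simplices of a single cell as descending chains (flags) $d$-point $\to (d-1)$-point $\to\cdots\to 0$-point, obtaining $k_d^{d-1}\cdots k_1^0$, and then multiply by the $k_x^d$ cells incident to the fixed $x$-point.

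There is, however, a genuine gap in your second paragraph. You assert that ``the set $K_x$ consists of the characteristic simplices of precisely these $k_x^d$ cells,'' but under the paper's definition $K_x$ comprises only those simplices whose $x$-\emph{vertex} is the given $x$-point, and for $x<d$ this is a strict subset of all simplices of the incident cells. In $\{4,3,5\}$ with $x=0$, for instance, your disjoint-union count gives $k_0^3\cdot 48=20\cdot 48=960$, whereas the number of simplices actually sharing a vertex equals the order of the vertex stabiliser (the symmetry group of the icosahedral vertex figure), namely $120$. So the partition of $K_x$ into whole cells fails. The paper's own proof makes exactly the same leap --- it counts descending chains from each $d$-point and multiplies by $k_x^d$ --- so the stated formula $k_x^dk_d^{d-1}\cdots k_1^0$ in fact gives the number of characteristic simplices in $G_x$ (the union of the incident cells), not in $K_x$. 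Your argument is therefore faithful to the paper, but the step you flagged as ``the only delicate point'' (the flag bijection for a single cell) is not the one in trouble; the identification of $K_x$ with a union of whole cells is.
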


\begin{proof}  
There are as many characteristic simplices connected to a $p$-point as many ways are going from the $p$-point to a $0$-point through chains of the form: $p$-point, $(p-1)$-point, 
$(p-2)$-point, $\ldots$, $1$-point, $0$-point. The number of these chains is 
$k_d^{d-1}k_{d-1}^{d-2}\cdots k_2^1k_1^0$ and the number of $p$-points is $k_x^d$.  
\end{proof}

\noindent \textbf{Remark.} There also exist matrices $\mathbf{K}$ for regular mosaics in spherical spaces. In the spherical case, a regular mosaic gives rise to a regular polyhedron.

\section{Belts with cells}

In this section we consider a cell as a set of characteristic simplices having a common $d$-point. 
We take an arbitrary $x$-point and calculate the number of  $y$-points of the set of cells having
the common $x$-point. Based on the method of the logic sieve, 
we can get the value $g_x^y$ from the alternating sum of the products of the elements of row $x$ and column $y$ of matrix $\mathbf{K}$.

\begin{lemma}\label{lemma:gxy}
 $g_x^y =\sum\limits^d_{{\genfrac{}{}{0pt}{}{j=0,}{j\geq x,  j\geq y}}} {(-1)}^{d-j}k_x^j\cdot k_j^y$.
\end{lemma}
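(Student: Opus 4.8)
The plan is to read the product $k_x^j k_j^y$ combinatorially and then reorganise the alternating sum by grouping terms according to the $y$-face they involve rather than according to $j$. Fix the common $x$-point of $G_x$ and let $F$ denote the corresponding $x$-face. By regularity every $j$-face of the mosaic carries the same number $k_j^y$ of $y$-subfaces, so for $j\ge\max(x,y)$ the product $k_x^j\,k_j^y$ counts exactly the incident pairs $(Q,R)$, where $Q$ is a $j$-face with $F\subseteq Q$ and $R$ is a $y$-face with $R\subseteq Q$. Consequently
\[
\sum_{\substack{j=0,\\ j\ge x,\ j\ge y}}^{d}(-1)^{d-j}k_x^j k_j^y
=\sum_{R}\ \sum_{j}(-1)^{d-j}\,\#\{Q:\dim Q=j,\ F\subseteq Q,\ R\subseteq Q\},
\]
where the outer sum runs over all $y$-faces $R$ that lie in a common cell with $F$, i.e.\ precisely over the $y$-points of $G_x$.

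First I would fix such a $y$-face $R$ and analyse the inner sum. A face $Q$ contains both $F$ and $R$ iff it contains their join $F\vee R$, the smallest face containing both; this join exists because $F$ and $R$ share a cell, whose face poset is the face lattice of a polytope. Writing $m=\dim(F\vee R)$, the $j$-faces $Q\supseteq F\vee R$ are, again by regularity, exactly $k_m^j$ in number, so the inner sum collapses to $\sum_{j=m}^{d}(-1)^{d-j}k_m^j$. This is exactly the row-$m$ alternating sum evaluated in Lemma~\ref{lemma:euler}\,\emph{iii}, which equals $1$. Hence every $y$-point of $G_x$ contributes exactly $1$, and the whole expression equals the number $g_x^y$ of $y$-points of $G_x$.

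The crux — and the step I expect to be the main obstacle — is the collapse of the inner sum to $1$, i.e.\ that the inclusion--exclusion precisely cancels the multiplicity with which each $y$-face is over-counted across the faces containing $F$. I would present this as an immediate consequence of Lemma~\ref{lemma:euler}\,\emph{iii}, whose content is in turn the Euler--Poincar\'e relation for the link (the vertex-figure-type spherical tessellation) of $F\vee R$, topologically a sphere $S^{d-m-1}$. The remaining care points are bookkeeping: checking that the summation range $j\ge\max(x,y)$ matches $m\le j\le d$ (terms with $j<m$ vanish since no such $Q$ exists), that the outer sum ranges over all and only the $y$-points of $G_x$ (each $R$ sharing a cell with $F$ is caught by the top-dimensional term $j=d$, and no $R$ outside $G_x$ appears), and the degenerate cases $y=x$, $y=d$, or $m=\max(x,y)$, all of which are covered by the same identity.
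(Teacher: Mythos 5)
Your proof is correct and follows essentially the same route as the paper: your join $F\vee R$ of dimension $m$ is exactly the paper's ``minimal $l$-dimensional common face'' containing both the $x$-point and the given $y$-point, the count of $j$-faces containing it is $k_m^j$ in both arguments, and both collapse the alternating sum to $1$ via Lemma~\ref{lemma:euler}\,\emph{iii}. Your write-up is somewhat more formal (explicit double counting over incident pairs and the face-lattice join), but the underlying idea is identical to the paper's multiplicity argument.
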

\begin{proof}
 The set of the cells $K_{d,j}$ $(j\in \{1, 2, \dots , n=k_x^d\})$  having a common $x$-point is the set $G_x$. It is the union of the
characteristic simplices having common $d$-points which are in the set of simplices having the common $x$-point.
In the cases \{4,3,4\} and \{4,3,5\},
Figure~\ref{abra:szimplex_pqr} shows the set whose elements are the cells having the common $1$-point and
it is the union of the characteristic simplices having common $3$-points, where the $3$-points are the vertices of
the characteristic simplices having the common $1$-point. 

\noindent Now we get, that
\begin{eqnarray*}
G_x^y = {K}_{d,1}^y\cup {K}_{d,2}^y\cup \cdots \cup {K}_{d,n}^y
        = \bigcup\limits_{j=1}^n {K}_{d,j}^y \label{eq:Gxy2Kdy}
\end{eqnarray*}
where $n=k_x^d$.
\begin{figure}[!htb]
\centering   
\begin{eqnarray*}
 \begin{pmatrix}
   &  &   &   &  &  \\
   &  & ({k_x^y}) & \cdots & k_x^{d-1} & k_x^d\\
   &  & \vdots &   &   &  \\
   &  & k_{d-1}^y&   &   &  \\
   &  & k_d^y & &     &  
 \end{pmatrix}
\end{eqnarray*} 
\caption{The elements of $\mathbf{K}$ whose alternating sum gives $g_x^y$ in Lemma \ref{lemma:gxy}.}\label{abra:maztix}
\end{figure}

In the following we determine the number $g_x^y = |G_x^y|$. The product $k_x^d k_d^y$ is larger than $g_x^y$, because the multiplicities of some $y$-points are larger than one.
Take the expression
\begin{eqnarray}\label{kxdkdy}
k_x^d k_d^y - k_x^{d-1}k_{d-1}^y + k_x^{d-2}k_{d-2}^y \cdots (-1)^{d-j}k_x^j k_j^y\cdots (-1)^{d-m}k_x^m k_m^y,
\end{eqnarray}
where $m$ is equal to the maximum of the set $\{x,y\}$, $d\geq j\geq l$, and 
we prove  that \eqref{kxdkdy} gives  $g_x^y$ without multiplicity. 

\noindent The product $k_x^j k_j^y$ is the number of  $y$-points connecting to the 
$j$-dimensional faces. Now we examine all the subexpressions $k_x^j k_j^y$ of \eqref{kxdkdy} considering the multiplicity of an arbitrary $y$-point. 
Let $l$ ($x\leq l\leq m$) be the minimum dimension, so that  $y$-point is on an 
$l$-dimensional common face with the vertex $x$, but not on any smaller-dimensional 
common faces. Then there is an $l$-point on the common $l$-face  with $y\in K_l$. 
The subexpression $k_x^d k_d^y$ of \eqref{kxdkdy} gives the number of the 
considered $y$-points as many times as the number of  $K_d$ around 
 $l$-point, so $k_l^d$ times. Similarly, in case of $k_x^{d-1}k_{d-1}^y$ the 
multiplicity of  $y$-point is the number of  $K_{d-1}$ around  $l$-point, so it is $k_l^{d-1}$. And similarly for the other 
terms. Using Lemma \ref{lemma:euler} we get
\begin{eqnarray*}
k_l^d - k_{l}^{d-1} + k_{l}^{d-2}- \cdots +(-1)^{d-l}k_l^l = \sum\limits^d_{j=l}  {(-1)}^{d-j}k_l^j=1.
\end{eqnarray*}
Thus the expression \eqref{kxdkdy} calculates the exact numbers (without multiplicity) of  $y$-points of  $G_x^y$. 
\end{proof}

We can write  $g_x^y$ in a matrix in the following way:
\begin{eqnarray*}
\mathbf{G}=
 \begin{pmatrix}
  g_0^0 & g_0^1 & \dots & g_0^d\\
  g_1^0 & g_1^1 & \dots & g_1^d\\
  \hdotsfor{4}\\
  g_d^0 & g_d^1 & \dots & g_d^d
 \end{pmatrix}.
\end{eqnarray*}

For a row $x$ of $\mathbf{G}$, we have the next lemma.
\begin{lemma}\label{lemma:sumgxj}
 $\sum\limits^d_{j=0}  {(-1)}^{j}g_x^j=1$.
\end{lemma}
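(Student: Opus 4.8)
The plan is to substitute the explicit formula for $g_x^y$ from Lemma~\ref{lemma:gxy} into the alternating sum and then reduce everything to the already-established identities of Lemma~\ref{lemma:euler} by interchanging the order of summation. To keep the indices clean I would first write the target as $\sum_{y=0}^d (-1)^y g_x^y$, and then expand each term using Lemma~\ref{lemma:gxy} as $g_x^y = \sum_j (-1)^{d-j} k_x^j k_j^y$, where $j$ runs over exactly those indices satisfying $x \le j \le d$ and $j \ge y$.

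Next I would swap the two summations. After the interchange the outer variable $j$ runs from $x$ to $d$, and for each such $j$ the constraint $j \ge y$ forces the inner variable $y$ into the range $0 \le y \le j$. This yields
$$\sum_{y=0}^d (-1)^y g_x^y = \sum_{j=x}^d (-1)^{d-j} k_x^j \left(\sum_{y=0}^j (-1)^y k_j^y\right).$$

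Now each piece is controlled by Lemma~\ref{lemma:euler}. The inner alternating sum $\sum_{y=0}^j (-1)^y k_j^y$ equals $1$ by part~i of that lemma (applied to row $j$), since the first $j+1$ entries of row $j$ are the face-counts of a $j$-dimensional polyhedron together with the terminal $k_j^j = 1$. Collapsing it leaves $\sum_{j=x}^d (-1)^{d-j} k_x^j$, which is precisely $1$ by part~iii of Lemma~\ref{lemma:euler} (applied with $l = x$). Hence the whole alternating sum equals $1$, as claimed.

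The only delicate point is the bookkeeping of the summation ranges when interchanging order: one must check that the combined constraints $j \ge x$ and $j \ge y$ of Lemma~\ref{lemma:gxy} translate, after the swap, into the clean ranges $x \le j \le d$ and $0 \le y \le j$, so that the inner sum is exactly the \emph{full} row-$j$ alternating sum of part~i, and not a truncated version of it. Once this reindexing is verified the conclusion is immediate; there is no analytic content, only the combinatorial double-counting already encoded in Lemma~\ref{lemma:euler}.
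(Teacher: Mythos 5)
Your proof is correct and is essentially identical to the paper's own argument: both substitute the formula of Lemma~\ref{lemma:gxy}, interchange the order of summation so the inner sum becomes the full row alternating sum $\sum_{y=0}^{j}(-1)^y k_j^y = 1$ (Lemma~\ref{lemma:euler}, part~i), and then collapse the outer sum $\sum_{j=x}^{d}(-1)^{d-j}k_x^j = 1$ via part~iii. Your explicit verification of the reindexed summation ranges is the same bookkeeping the paper performs implicitly.
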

\begin{proof}
\begin{eqnarray*}
\sum\limits^d_{j=0}  {(-1)}^{j}g_x^j &=& \sum\limits^d_{j=0}   {(-1)}^{j}
             \sum\limits^d_{{\genfrac{}{}{0pt}{}{i=0,}{i\geq x,  i\geq j}}}  {(-1)}^{d-i}k_x^i k_i^j =
       \sum\limits^d_{i=x} {(-1)}^{d-i}k_x^i
              \sum\limits^i_{j=0}  {(-1)}^{j}k_i^j    \\
                &=& \sum\limits^d_{i= x}  {(-1)}^{d-i}k_x^i  \cdot 1
                  =  \sum\limits^d_{i=x}  {(-1)}^{d-i}k_x^i=1.
\end{eqnarray*}
\end{proof}

\begin{lemma}\label{lemma:sumpartgxj}
 Let $H$ be the common part of a $G_x$ and a convex part of the mosaic and let the number 
of  $j$-points $(j\in \{0,1,\ldots,d\})$ of $H$ be $h_x^0$, $h_x^1$,\ldots, 
$h_x^d$, respectively. Then
 \begin{equation*}\label{eq:sumpartgxj}    
 \sum\limits^d_{j=0}  {(-1)}^{j}h_x^j=1.
 \end{equation*}
\end{lemma}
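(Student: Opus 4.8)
The plan is to recognize the alternating sum $\sum_{j=0}^{d}(-1)^j h_x^j$ as the Euler characteristic of the region $H$, and then to argue that $H$ is topologically a ball, so that this characteristic equals $1$. By the very definition of an $x$-point as the centre of an $x$-dimensional face, the number $h_x^j$ is precisely the number of $j$-dimensional faces of $H$; hence $\sum_{j=0}^{d}(-1)^j h_x^j=\chi(H)$ by the Euler--Poincar\'e relation, which is the same mechanism already exploited in Lemma \ref{lemma:euler} and Lemma \ref{lemma:sumgxj}. In fact Lemma \ref{lemma:sumgxj} becomes the special case $H=G_x$ of this viewpoint, so the topological reading unifies the two statements.

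First I would fix notation: let $p$ be the $x$-point shared by every cell of $G_x$, and let $C$ be the convex part of the mosaic so that $H=G_x\cap C$. Taking $C$ to be a convex union of whole cells, $H$ is a genuine subcomplex of the mosaic; its faces are honest mosaic faces and no cell is cut open, which is exactly what makes each $h_x^j$ a legitimate count of $j$-cells in a regular polytopal complex. Thus the Euler--Poincar\'e identity applies to $H$ verbatim.

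Next I would establish contractibility. The set $G_x$ is star-shaped with respect to $p$: every cell of $G_x$ contains $p$ and is itself convex, so every segment from $p$ into such a cell remains inside it. Since $C$ is convex and contains $p$, for any $q\in H$ the segment $pq$ lies in $G_x$ by star-shapedness and in $C$ by convexity, hence lies in $H$. Therefore $H$ is star-shaped about $p$, so it is contractible and in particular a topological $d$-ball, giving $\chi(H)=1$. Combined with the first paragraph this yields $\sum_{j=0}^{d}(-1)^j h_x^j=1$.

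The main obstacle I anticipate is structural rather than computational: one must be sure that the intersection $H$ is a regular polytopal complex whose $j$-cells are exactly the faces counted by $h_x^j$, and that ``convex part of the mosaic'' is understood as a convex union of cells (with $p$ inside it), so that boundary faces are not sliced into new, uncounted pieces and the reference point $p$ stays available for the star-shaped contraction. Once this hypothesis is in place, $H$ is a contractible polytopal ball and the conclusion is immediate; verifying contractibility in the full generality in which the lemma is stated is the delicate step, whereas the Euler-characteristic algebra is routine.
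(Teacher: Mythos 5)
Your reduction of the alternating sum to the Euler characteristic of the subcomplex $H$ is fine, and the star-shapedness argument is correct as far as it goes --- but it only goes as far as the extra hypothesis you impose, namely that the common $x$-point $p$ of $G_x$ lies in the convex part $C$. The lemma is stated, and later used, without that hypothesis, and the case you exclude is exactly the one the lemma is needed for. In the proof of Lemma \ref{lemma:wxy} the present lemma is invoked for $H=G_y\cap W_i$, where $G_y$ is centred at a $y$-point $P$ of $W_{i+1}$ that is \emph{not} in $W_i$ (these are the points counted by $b_{i+1}^y=w_{i+1}^y-w_i^y$). Such a $P$ lies outside $W_i$ as a point of space: the mosaic is face-to-face, so if $P$ belonged to a closed cell of $W_i$, the $y$-face centred at $P$ would be a face of that cell and $P$ would already be a $y$-point of $W_i$. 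Hence for this $H$ your reference point is simply absent, $p\notin H$, star-shapedness about $p$ is vacuous, and contractibility --- even connectedness --- of $H$ is no longer automatic. In the abstract generality in which you argue (a star-shaped set intersected with a convex set avoiding the centre), the conclusion can genuinely fail: two convex pieces glued only at $p$ and cut by a convex set missing $p$ can leave two components, with alternating sum $2$. Ruling this out needs the incidence structure of the mosaic itself, and that is precisely the content your proposal does not supply; it cannot be dropped, since this is the case that makes the recursion $w_{i+1}^y=\sum_j(-1)^jw_i^jg_j^y$ count every new point exactly once.

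The paper avoids the issue by arguing combinatorially rather than topologically: it first shows that $H$ consists of whole faces of the mosaic, lets $l$ be the largest dimension of a face occurring in $H$, and then assembles $H$ by attaching its $l$-faces one after another, using Lemma \ref{lemma:euler} to check that each attachment --- along a single common $m$-face, whose own alternating face sum is $1$ --- keeps the total at $(1+1)-1=1$. Within your added hypothesis $p\in C$ your argument is a correct and genuinely shorter route (it subsumes Lemma \ref{lemma:sumgxj} and also covers the points lying on the boundary of $W_i$), but as a proof of the lemma as stated and as used it has a real gap at the case $p\notin C$.
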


\begin{proof}
If $H$ is a $G_m$ $(0\leq m \leq x)$ then $h_x^j=g_m^j$ and Lemma \ref{lemma:sumgxj} applies when $x=m$.

\noindent In the following we examine the other cases.
As  $G_x$ is convex,  $H$ is also convex. If $H$ contains the two vertices of an edge, then it contains this edge too. If $H$ contains $i$ pieces of $i$-dimensional faces connecting to a common $(i+1)$-dimensional face, then it contains the common $(i+1)$-dimensional face as well $(i\in {0,1,\ldots,d-1})$. So $H$ always consists of whole faces of the cells. Let $l$ be the largest integer, for which $h_x^l>0$, the largest dimension of the faces of $H$ is $l$. Then $H$ consists of $h_x^l$ pieces of $l$-faces of the mosaic.

\noindent If $h_x^l=1$ then $H$ is an $l$-dimensional face (an $l$-dimensional  polyhedron). Then using the Euler-theorem for $H$ we get
\begin{equation}\label{eq:hx1}
h_x^0 - h_x^1 + \cdots +(-1)^{l-1}h_x^{l-1}+(-1)^lh_x^l = 1-(-1)^l+(-1)^l=1.
\end{equation}

\noindent If $h_x^l>1$ then we build up $H$ with $h_x^l$ pieces of $l$-faces. The 
alternating sum of any \mbox{$m$-dimensional} face (which is an $m$-dimensional regular 
polyhedron) is 1 from the equation \eqref{eq:hx1} when $l=m$. Let $H_1$ be an $l$-face. 
If we join two $l$-faces together along a common $(l-1)$-dimensional face, 
let it be $H_2$. Then from the sum of alternating sums of the two $l$-faces we have to 
extract the alternating sum of  $(l-1)$-face. So we get $(1+1)-1=1$, too. Join again 
to it another $l$-face, and we get $H_3$, and so on until we get $H$. 
Generally, let $m$ be the largest dimensional common face of $H_i$ and the next $l$-face. 
In this case the number of the common $m$-faces is 1. The alternating sum of $H_{i+1}$ 
is $2 - (k_m^0 +\cdots+ (-1)^{m}k_m^m) = 1$, where $k_m^y$ gives  how many common 
$y$-faces there are. The value $k_m^y$ also shows how many $y$-faces connect to an $m$-face on the $l$-face. So 
it is an element of the matrix $\mathbf{K}$ of the regular $l$-face and the alternating 
sum $k_m^0 +\cdots+ (-1)^{m}k_m^m$ is 1 from Lemma \ref{lemma:euler}. \end{proof}

\begin{lemma}\label{lemma:rankG}
\emph{Rank}$(\mathbf{G})=d+1$.
\end{lemma}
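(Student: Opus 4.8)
Since $\mathbf{G}$ is a square $(d+1)\times(d+1)$ matrix, proving that its rank equals $d+1$ is the same as proving that $\mathbf{G}$ is nonsingular, i.e.\ that $\det\mathbf{G}\neq 0$. The plan is to read the explicit formula of Lemma \ref{lemma:gxy},
\[
 g_x^y=\sum_{\max(x,y)\le j\le d}(-1)^{d-j}k_x^j\,k_j^y,
\]
as the $(x,y)$-entry of a product of three matrices, and then to show that each factor has nonzero determinant.

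Concretely, I would split the matrix $\mathbf{K}$ into its upper- and lower-triangular parts and carry the alternating signs in a diagonal matrix. Let $\mathbf{U}$ be the upper-triangular matrix whose $(x,j)$-entry is $k_x^j$ for $j\ge x$ and $0$ otherwise, let $\mathbf{L}$ be the lower-triangular matrix whose $(j,y)$-entry is $k_j^y$ for $j\ge y$ and $0$ otherwise, and let $\mathbf{D}=\operatorname{diag}\big((-1)^{d-0},(-1)^{d-1},\dots,(-1)^{d-d}\big)$. Then the $(x,y)$-entry of $\mathbf{U}\mathbf{D}\mathbf{L}$ is $\sum_j U_{x,j}(-1)^{d-j}L_{j,y}$, and since $U_{x,j}\neq 0$ forces $j\ge x$ while $L_{j,y}\neq 0$ forces $j\ge y$, the surviving terms are exactly those with $j\ge\max(x,y)$. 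Thus $\mathbf{G}=\mathbf{U}\mathbf{D}\mathbf{L}$, matching Lemma \ref{lemma:gxy} term by term.

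The conclusion is then immediate. Both $\mathbf{U}$ and $\mathbf{L}$ are triangular with diagonal entries $k_x^x=1$ (by the definition of $k_x^y$), so $\det\mathbf{U}=\det\mathbf{L}=1$, while $\mathbf{D}$ is diagonal with entries $\pm1$, so $\det\mathbf{D}=(-1)^{d(d+1)/2}\neq 0$. Hence $\det\mathbf{G}=\det\mathbf{U}\cdot\det\mathbf{D}\cdot\det\mathbf{L}=\pm1\neq 0$, and $\mathbf{G}$ has full rank $d+1$.

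The only real point to check — and what I regard as the main obstacle — is the exactness of the factorization: one must verify that the triangular splitting of $\mathbf{K}$ reproduces precisely the index range $j\ge\max(x,y)$ of Lemma \ref{lemma:gxy}, with no spurious or missing terms, and that the diagonal entries of $\mathbf{U}$ and $\mathbf{L}$ are indeed all equal to $1$. Once the identity $\mathbf{G}=\mathbf{U}\mathbf{D}\mathbf{L}$ is secured, the determinant computation is routine and the rank statement follows with no further work.
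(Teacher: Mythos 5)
Your proof is correct, and it takes a genuinely different route from the paper's. The paper argues via linear independence of the columns of $\mathbf{G}$: assuming $\sum_{j}\alpha_j g_x^j=0$ for every row $x$, it regroups the sum using Lemma \ref{lemma:gxy} as $\sum_{i\ge x}(-1)^{d-i}k_x^i\bigl(\sum_{j\le i}\alpha_j k_i^j\bigr)$ and reduces to the lower-triangular homogeneous system $\sum_{j\le i}\alpha_j k_i^j=0$ for $i=0,\dots,d$, whose unit diagonal ($k_i^i=1$) forces every $\alpha_j=0$. Your argument packages exactly the same structural facts --- the index restriction $j\ge\max(x,y)$ in Lemma \ref{lemma:gxy} and $k_x^x=1$ --- into the explicit factorization $\mathbf{G}=\mathbf{U}\mathbf{D}\mathbf{L}$, and that factorization is indeed exact: nonvanishing of $U_{x,j}$ forces $j\ge x$, nonvanishing of $L_{j,y}$ forces $j\ge y$, so the product reproduces precisely the terms of Lemma \ref{lemma:gxy} with no spurious or missing ones. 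Your route buys two things. First, a stronger conclusion: not merely full rank but $\det\mathbf{G}=\pm1$, i.e.\ unimodularity of $\mathbf{G}$. Second, rigor where the paper's write-up is slightly loose: the paper's claim that the vanishing of the outer alternating sum holds \emph{if and only if} the inner sums vanish is not valid row by row --- it requires combining all rows (e.g.\ a downward induction starting from $x=d$, where the outer sum collapses to a single term); your determinant computation bypasses that delicate step entirely.
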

\begin{proof} Take the column vectors of $\mathbf{G}$. They are independent if and only if the only linear combination which gives the vector zero is the linear combination with all coefficients zero $\alpha_j\ (j\in {0,1,\ldots,d})$.  In case of any row~$x$ (the $x$th coordinates of the vectors) we get 
\begin{eqnarray*}
\sum\limits^d_{j=0}  {\alpha_j}g_x^j &=& \sum\limits^d_{j=0}   {\alpha_j}
             \sum\limits^d_{{\genfrac{}{}{0pt}{}{i=0,}{i\geq x,  i\geq j}}}  {(-1)}^{d-i}k_x^i k_i^j =
       \sum\limits^d_{i=x} 
              \sum\limits^i_{j=0}  {\alpha_j} {(-1)}^{d-i}k_x^i k_i^j  \\
                &=& \sum\limits^d_{i=x} {(-1)}^{d-i}k_x^i
                              \sum\limits^i_{j=0}  {\alpha_j}k_i^j   =0.\ 
\end{eqnarray*}
It is equal to zero if and only if the coefficients $\alpha_j$ are the only 
solutions of the linear equation system 
$\sum\limits^i_{j=0}  {\alpha_j}k_i^j=0$, $(i\in {x,\ldots,d})$. For row~$x=0$, we can 
write the equation system in the form below
\begin{eqnarray*}
0 &=&k_0^0 \alpha_0 \\
0 &=& k_1^0 \alpha_0 + k_1^1\alpha_1 \\
0 &=& k_2^0 \alpha_0 + k_2^1 \alpha_1 + k_2^2 \alpha_2\\
&\cdots&  \\
0 &=& k_d^0 \alpha_0 + k_d^1 \alpha_1 + \cdots + k_d^d \alpha_d,
\end{eqnarray*}
which results that all  $\alpha_j$  must be $0$, because none of $k_x^y$ are zero.  
\end{proof}

\begin{lemma}\label{lemma:wxy}
 \begin{equation} \label{eq:wxy} 
  w_{i+1}^y =\sum\limits^d_{j=0} {(-1)}^{j}w_i^{j}\cdot g_{j}^y.
 \end{equation}
\end{lemma}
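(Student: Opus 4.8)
The plan is to establish \eqref{eq:wxy} by a signed count (inclusion--exclusion) of the $y$-points of $W_{i+1}$, in which each such point is weighted by an alternating sum that collapses to $1$ by Lemma~\ref{lemma:sumpartgxj}.

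First I would fix the geometric meaning of the right-hand side. For a $j$-point $Q$, the quantity $g_j^y$ counts the $y$-points of the star $G_j(Q)$ of $Q$, i.e.\ of the set of all cells containing $Q$. Hence
\begin{equation*}
\sum_{j=0}^d (-1)^j w_i^j g_j^y = \sum_{j=0}^d (-1)^j \sum_{Q \in W_i^j} \bigl|G_j^y(Q)\bigr|,
\end{equation*}
a signed sum of contributions, one for each pair consisting of a $j$-point $Q$ of $W_i$ and a $y$-point $R$ lying in a common cell with $Q$. I would then swap the order of summation and collect, for each $y$-point $R$, its total coefficient.

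The key observation is that the condition ``$R$ is a $y$-point of the star $G_j(Q)$'' is symmetric: it holds exactly when $R$ and $Q$ lie in a common cell, equivalently when $Q$ belongs to the star $G_y(R)$ of $R$. Consequently the $j$-points $Q\in W_i$ that contribute to a fixed $R$ are precisely the $j$-points of the set $H:=G_y(R)\cap W_i$, so the coefficient of $R$ equals $\sum_{j=0}^d (-1)^j h^j$, where $h^j$ denotes the number of $j$-points of $H$. Moreover $R$ receives a nonzero contribution only when $H\neq\emptyset$, i.e.\ only when some cell containing $R$ meets $W_i$; but such a cell lies in $W_{i+1}$, so this happens exactly for the $y$-points $R\in W_{i+1}$.

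It then remains to show $\sum_{j=0}^d (-1)^j h^j = 1$ for each $R\in W_{i+1}$. Here $H$ is the common part of the star $G_y(R)$ (a copy of a $G_x$) and the region $W_i$, a convex part of the mosaic, so Lemma~\ref{lemma:sumpartgxj} yields the value $1$ directly. Summing over all $y$-points then shows the right-hand side counts each $y$-point of $W_{i+1}$ exactly once, which is $w_{i+1}^y$. I expect the main obstacle to be the geometric input feeding Lemma~\ref{lemma:sumpartgxj}: verifying that $W_i$ is convex and that the ``corner'' $H=G_y(R)\cap W_i$ is genuinely a common part of a $G_x$ and a convex part of the mosaic (in particular nonempty and face-complete), so that the alternating-sum identity applies. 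The bookkeeping of the summation swap is routine once this is in place.
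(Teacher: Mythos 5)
Your proposal is correct and follows essentially the same route as the paper: both swap the double summation, use the symmetry that $R$ lies in a cell with $Q$ iff $Q$ lies in the star $G_y(R)$, and collapse each $y$-point's signed multiplicity to $1$ via Lemma~\ref{lemma:sumpartgxj} (the paper invokes Lemma~\ref{lemma:sumgxj} separately for points interior to $W_i$, but that case is subsumed in Lemma~\ref{lemma:sumpartgxj} exactly as you use it). The convexity/face-completeness of $W_i$ that you flag as the remaining geometric input is likewise assumed without further proof in the paper, so your argument is no less complete than the original.
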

\begin{proof} The cells of the belt $(i+1)$ come from among the cells of the mosaic having common vertices with the belt $i$ but not in $W_i$. And $W_{i+1}=B_{i+1}\cup W_i=\bigcup\limits_{j\in W_i^0}
G_{0,j} $.

\noindent We take an arbitrary $y$-point in the belt $(i+1)$. 

\noindent When it is in the belt $i$, then it is contained by $g_y^0$ pieces of $G_0$ and 
its multiplicity is $g_y^0$ calculated by subexpression $w_i^{0}\cdot g_{0}^y$. In a 
similar way we get that the multiplicity of  $y$-point is $g_y^j$ using the 
expression $w_i^{j}\cdot g_{j}^y$. Now we sum the multiplicity with alternating signs based on \eqref{eq:wxy} and after using Lemma \ref{lemma:sumgxj} we get 
that the multiplicity of any $y$-point is exactly one.  

\noindent If  $y$-point is on the belt $i$ or in the belt $(i+1)$ we can prove in a similar way applying Lemma \ref{lemma:sumpartgxj} that the multiplicity of  $y$-point is also one. 
\end{proof}

\bigskip

Let define
\begin{eqnarray}\label{eq:defM}
\mathbf{M}=\mathbf{G}^T
 \begin{pmatrix}
  1 & 0  & 0 & \dots & 0\\
  0 & -1 & 0 & \dots & 0\\
  0 & 0  & 1 & \dots & 0\\
  \hdotsfor{5}\\
  0 & 0  & 0 & \dots & (-1)^d
 \end{pmatrix}.
\end{eqnarray}
Then using Lemma \ref{lemma:wxy} for all $y$ we get the form
\begin{eqnarray}\label{eq:wMw}
\mathbf{w}_{i+1}=\mathbf{M}\mathbf{w}_i
\end{eqnarray}
and furthermore
\begin{eqnarray*}\label{bMw}
\mathbf{v}_{i+1}=\mathbf{w}_{i+1}-\mathbf{w}_{i}=\mathbf{M}\mathbf{w}_i-\mathbf{I}\mathbf{w}_i=(\mathbf{M-I})\mathbf{w}_i,
\end{eqnarray*}
where $\mathbf{I}$ is the identity matrix. 

Lemma \ref{lemma:rankG} provides that rank($\mathbf{G})=d+1$ and looking at \eqref{eq:defM} we have the following result: 
\begin{lemma}\label{lemma:rankM}
\emph{Rank}$(\mathbf{M})=d+1$.
\end{lemma}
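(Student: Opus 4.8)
The claim is that $\operatorname{Rank}(\mathbf{M}) = d+1$, where $\mathbf{M} = \mathbf{G}^T D$ and $D = \operatorname{diag}(1, -1, 1, \dots, (-1)^d)$. The plan is to exploit the fact that rank is unchanged by multiplying on the right (or left) by an invertible matrix, together with the behaviour of rank under transposition. Concretely, $D$ is a diagonal matrix whose diagonal entries are $\pm 1$, all nonzero, so $\det D = (-1)^{0+1+\cdots+d} \ne 0$ and $D$ is invertible. Likewise transposition preserves rank. Hence the entire statement should reduce to the already-established Lemma~\ref{lemma:rankG}.

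First I would recall the standard linear-algebra facts I intend to use: for any square matrix $\mathbf{A}$ one has $\operatorname{Rank}(\mathbf{A}^T) = \operatorname{Rank}(\mathbf{A})$, and for any invertible matrix $\mathbf{B}$ of compatible size one has $\operatorname{Rank}(\mathbf{A}\mathbf{B}) = \operatorname{Rank}(\mathbf{A})$ (multiplication by an invertible matrix is an isomorphism on the column space, so it cannot change the dimension of the image). Next I would observe that the diagonal matrix appearing in \eqref{eq:defM}, call it $\mathbf{D}$, has all diagonal entries equal to $\pm 1$ and is therefore invertible, indeed $\mathbf{D}^{-1} = \mathbf{D}$ since each $(-1)^j$ squares to $1$. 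Then I would chain these facts: starting from $\mathbf{M} = \mathbf{G}^T \mathbf{D}$, multiplication by the invertible $\mathbf{D}$ gives $\operatorname{Rank}(\mathbf{M}) = \operatorname{Rank}(\mathbf{G}^T)$, and transposition gives $\operatorname{Rank}(\mathbf{G}^T) = \operatorname{Rank}(\mathbf{G}) = d+1$ by Lemma~\ref{lemma:rankG}.

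There is essentially no obstacle here; the lemma is a one-line consequence of Lemma~\ref{lemma:rankG} once the two elementary invariance properties of rank are invoked, which is presumably why the excerpt already signals it with the phrase ``we have the following result.'' The only thing to be slightly careful about is making the logical direction explicit: the statement of Lemma~\ref{lemma:rankM} follows \emph{from} Lemma~\ref{lemma:rankG}, not the other way round, and the role of $\mathbf{D}$ is purely that it is an invertible (signature) matrix, so no information about the mosaic-specific entries $g_x^y$ is needed beyond what Lemma~\ref{lemma:rankG} already supplies. I would therefore write the proof as a short displayed chain of equalities, for instance
\[
\operatorname{Rank}(\mathbf{M}) = \operatorname{Rank}(\mathbf{G}^T \mathbf{D}) = \operatorname{Rank}(\mathbf{G}^T) = \operatorname{Rank}(\mathbf{G}) = d+1,
\]
with one sentence justifying each step (invertibility of $\mathbf{D}$ for the second equality, invariance of rank under transposition for the third, and Lemma~\ref{lemma:rankG} for the last).
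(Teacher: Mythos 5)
Your proof is correct and is exactly the argument the paper intends: the paper states Lemma~\ref{lemma:rankM} as an immediate consequence of Lemma~\ref{lemma:rankG} together with the definition \eqref{eq:defM}, i.e.\ $\mathbf{M}=\mathbf{G}^T\mathbf{D}$ with $\mathbf{D}$ an invertible $\pm1$ diagonal matrix, so rank is preserved under the transposition and the right-multiplication. Your write-up just makes explicit the two elementary rank invariances that the paper leaves unstated.
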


\section{Eigenvalues of matrix M}\label{sec:lemmas}

We get the following linear recursion for the sequences $w_i^y$  $(y \in {0,1,2,\ldots,d}$) in a matrix form, where the index of recursion is $i$ $(i\geq 0)$: $\mathbf{w}_{i+1}=\mathbf{M}\mathbf{w}_i$. The recursive sequences $w_i^y$ are defined by \eqref{eq:wxy}. Let $n=d+1$ be the rank$(\mathbf{M})$.

Let the sequence $\left\{ r_i\right\}_{i=1}^\infty$  be defined by
\begin{eqnarray}
r_{i}&=&{\balpha} ^T\mathbf{w}_i, \label{eq:ra}
\end{eqnarray}
\noindent where $\mathbf{\balpha}$ is a real vector. The coordinates of $\mathbf{v}_i$ $(i\geq 1)$ also satisfy the equation \eqref{eq:ra}, since
$\mathbf{v}_{i}=(\mathbf{M-I})\mathbf{M}^{-1}\mathbf{w}_i$ and $\mathbf{e}_y^T\mathbf{v}_{i}=  b_{i}^y=\mathbf{e}_y^T(\mathbf{I}-\mathbf{M}^{-1})\mathbf{w}_i$, so $\balpha^T=\mathbf{e}_y^T(\mathbf{I}-\mathbf{M}^{-1})$, where $\mathbf{e}_y^T$ is the $y$th normal basis vector. 

Let 
\begin{eqnarray}
z^n={\beta}_1z^{n-1}+{\beta}_2z^{n-2}+ \dots +{\beta}_nz^{0} \label{eq:karartrisztikus egyenlet}
\end{eqnarray}
be the characteristic equation of the matrix $\mathbf{M}$, where $\beta_j \in {\mathbb R}$ and $\beta_n\neq0$ for rank$(\mathbf{M})=n$.

\begin{lemma}
The characteristic equation of the recursive sequence $r_i$ and matrix $\mathbf{M}$ are the same.  
\end{lemma}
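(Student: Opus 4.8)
The plan is to derive the recurrence for $\{r_i\}$ directly from the matrix recursion $\mathbf{w}_{i+1}=\mathbf{M}\mathbf{w}_i$ by invoking the Cayley--Hamilton theorem. The starting observation is that iterating the matrix recursion gives $\mathbf{w}_{i+k}=\mathbf{M}^{k}\mathbf{w}_i$, so that for every $k\geq 0$
\begin{eqnarray*}
r_{i+k}=\balpha^T\mathbf{w}_{i+k}=\balpha^T\mathbf{M}^{k}\mathbf{w}_i .
\end{eqnarray*}
This reduces the whole question to a statement about powers of $\mathbf{M}$, which is exactly what Cayley--Hamilton controls.

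First I would write down the Cayley--Hamilton identity associated with the characteristic equation \eqref{eq:karartrisztikus egyenlet}, namely
\begin{eqnarray*}
\mathbf{M}^{n}=\beta_1\mathbf{M}^{n-1}+\beta_2\mathbf{M}^{n-2}+\cdots+\beta_n\mathbf{I}.
\end{eqnarray*}
Then I would multiply this matrix identity on the left by $\balpha^T$ and on the right by $\mathbf{w}_i$. Using the displayed formula $r_{i+k}=\balpha^T\mathbf{M}^{k}\mathbf{w}_i$ termwise, this collapses to the scalar recurrence
\begin{eqnarray*}
r_{i+n}=\beta_1 r_{i+n-1}+\beta_2 r_{i+n-2}+\cdots+\beta_n r_i ,
\end{eqnarray*}
valid for all $i\geq 1$. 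Thus $\{r_i\}$ satisfies the linear recurrence whose characteristic polynomial is precisely $z^{n}-\beta_1 z^{n-1}-\cdots-\beta_n$, the characteristic polynomial of $\mathbf{M}$. In particular every eigenvalue of $\mathbf{M}$ is a root of the characteristic equation of the sequence, which is what is needed later to single out the largest eigenvalue $z_1$ as the growing ratio.

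The delicate point — and the step I expect to be the main obstacle — is the word \emph{same}: so far we have only shown that the characteristic polynomial of $\mathbf{M}$ annihilates the sequence, hence the \emph{minimal} characteristic polynomial of $\{r_i\}$ divides that of $\mathbf{M}$; a priori it could be a proper divisor if the particular vectors $\balpha$ and $\mathbf{w}_0$ are degenerate with respect to the eigenstructure of $\mathbf{M}$. To upgrade divisibility to equality I would use that $\mathrm{rank}(\mathbf{M})=n$ (Lemma~\ref{lemma:rankM}), so $\beta_n\neq 0$ and $0$ is not an eigenvalue, together with the fact that the initial data $\mathbf{w}_0$ (a vertex-count vector of a genuine cell) has a nonzero component along the dominant eigendirection. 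Since the eigenvalues governing the geometric growth are distinct and the combinatorial initial vector is nondegenerate, no root can drop out, and the minimal recurrence has order exactly $n$. This makes the two characteristic equations coincide, completing the argument.
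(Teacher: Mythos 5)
Your proposal is correct and follows essentially the same route as the paper: apply Cayley--Hamilton to $\mathbf{M}$, contract the resulting matrix identity with $\balpha^T$ on the left and a vector $\mathbf{w}_i$ on the right, and use $r_{i+k}=\balpha^T\mathbf{M}^k\mathbf{w}_i$ to collapse it into the scalar recurrence $r_{i+n}=\beta_1r_{i+n-1}+\cdots+\beta_nr_i$. The ``delicate point'' you raise at the end (that this only shows the minimal recurrence of $\{r_i\}$ divides the characteristic polynomial of $\mathbf{M}$) is real but is not resolved in the paper either: the paper's proof explicitly concludes only that $r_i$ has ``(at most) rank $n$,'' and the possible degeneracy is handled later, where the nonvanishing of the leading coefficient $g_1$ is verified separately for each mosaic (by explicit computation), which is all that the limit lemmas actually require.
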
 

\begin{proof}  Using the theorem of {Cayley--Hamilton}, $z$ can be 
substituted by $\mathbf{M}$ in \eqref{eq:karartrisztikus egyenlet}, and we get
\begin{multline}\label{eq:Ma3}
\mathbf{M}^{n}=  \ {\beta}_1 \mathbf{M}^{n-1}+\dots + {\beta}_j\mathbf{M}^{n-j}+ \dots+{\beta}_n \mathbf{M}^{0}\\
\balpha ^T \mathbf{M}^{n}\mathbf{w}_{i-n} = \
\balpha ^T\left({\beta}_1 \mathbf{M}^{n-1}+\dots + {\beta}_j\mathbf{M}^{n-j} + 
\dots+{\beta}_n \mathbf{M}^{0}\right)\mathbf{w}_{i-n}\\
 = \ {\beta}_1 \balpha ^T\mathbf{M}^{n-1}\mathbf{w}_{i-n}+ \dots + {\beta}_j
\balpha ^T\mathbf{M}^{n-j}\mathbf{w}_{i-n}+\dots+
{\beta}_n \balpha ^T\mathbf{M}^{0}\mathbf{w}_{i-n}.
\end{multline}

\noindent In case of $1\leq i, j$ from \eqref{eq:wMw} and \eqref{eq:ra} we get 
\begin{eqnarray}
 r_{j}&=&\balpha ^T\mathbf{M}^{j-i}\mathbf{w}_{i}, \nonumber\\
  \noalign{\noindent and}
     r_{i-j}&=&\balpha ^T\mathbf{M}^{n-j}\mathbf{w}_{i-n}, \quad\quad (i\geq j+1,\  i\geq n+1).\label{eq:rm3}
\end{eqnarray}

\noindent Substituting  \eqref{eq:rm3} into \eqref{eq:Ma3} we get  
\begin{eqnarray*}
  r_i &=& {\beta}_1r_{i-1}+ \dots +{\beta}_jr_{i-j} + \dots  +{\beta}_nr_{i-n}, \quad\quad (i\geq n+1).
\end{eqnarray*}

\noindent So,  $r_i$ is a linear recursive sequence with (at most) rank $n$ and then its characteristic equation is also \eqref{eq:karartrisztikus egyenlet}.  
\end{proof}

\medskip

The factorization of the common characteristic polynomial is:
\begin{eqnarray*}
z^n-{\beta}_1z^{n-1}-{\beta}_2z^{n-2}- \dots -{\beta}_nz^{0}=(z-z_1)^{m_1}\cdots(z-z_h)^{m_h},
\label{eq:karartrisztikus egyenlet_gyokok}
\end{eqnarray*}
\noindent where $z_1$, $\dots$, $z_h$ are non zero, different roots and $m_1+\cdots+m_h=n,\  1\leq h\leq n$.

Any elements of the linear recursive sequence $r_i$ can be determined explicitly because of the theorem of recursive sequences {(\cite[p.33]{sho})} as follows
\begin{eqnarray*}
r_i=g_1(i)z_1^i+g_2(i)z_2^i+\dots +g_h(i)z_h^i, \label{eq:gyokokkel}
\end{eqnarray*}
\noindent where $g_k(i)$ are the polynomials in $i$ with degree at most
$(m_k-1)$ and depend on $r_1$, $r_2$, $\dots$, $r_n$,  $m_k$ and  $z_k$ $(k=1,\dots, h)$. If $z_k$ is a simple root, $m_k=1$, then $g_k(i)=g_k$ is a constant. 

\bigskip
Now we assume that all the roots of the characteristic equation \eqref{eq:karartrisztikus egyenlet} are real ($z_k \in {\mathbb R}$, $k=1,\dots, h\leq  n$)  and  $r_i\neq0$ $(i\geq1)$. And let $s_i=\sum_{j=0}^ir_j$.   

\begin{lemma}\label{tetel:lim_rr}
Let  $1< h\leq n$, $\left|z_1\right|>\left|z_k\right|\neq 0 $, $\left|z_1\right|>1$, ${g_1\neq0}$
$(k=2,\dots, h)$, then  $\lim\limits_{ i\to \infty }{\frac{r_{i+1}}{r_i}}=z_1$ and
$\lim\limits_{ i\to \infty} {\frac{r_i}{s_i}}= {\frac{z_1-1}{z_1}}$ $(i\geq1)$.
\end{lemma}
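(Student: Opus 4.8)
The plan is to exploit the explicit Binet-type representation $r_i = g_1(i)z_1^i + \sum_{k=2}^h g_k(i) z_k^i$ supplied by the theory of linear recursive sequences, and to extract the dominant contribution of the largest root $z_1$. Under the hypotheses $g_1\neq 0$ and $|z_1|>|z_k|$ for every $k\geq 2$, the term $g_1(i)z_1^i$ dominates all the others, so morally $r_i\approx g_1 z_1^i$ for large $i$; the whole proof amounts to making this precise and pushing it through both ratios. Since the notation $g_1$ (rather than $g_1(i)$) indicates that $z_1$ is a simple root, I treat $g_1$ as a nonzero constant; the multiple-root case is handled by the same estimates after dividing by $g_1(i)$ and using $g_1(i+1)/g_1(i)\to 1$.

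First I would record the auxiliary estimate that the subdominant part is negligible next to $|z_1|^i$, namely
$$\frac{1}{z_1^i}\sum_{k=2}^h g_k(i)z_k^i = \sum_{k=2}^h g_k(i)\left(\frac{z_k}{z_1}\right)^i \longrightarrow 0 \qquad (i\to\infty),$$
which holds because each $|z_k/z_1|<1$ and a polynomial in $i$ times a geometric factor of ratio strictly less than $1$ tends to $0$. For the first limit I then divide numerator and denominator by $z_1^i$:
$$\frac{r_{i+1}}{r_i} = z_1 \cdot \frac{g_1 + \sum_{k=2}^h g_k(i+1)(z_k/z_1)^{i+1}}{g_1 + \sum_{k=2}^h g_k(i)(z_k/z_1)^i}.$$
By the estimate above both correction sums vanish, the fraction tends to $g_1/g_1=1$, and hence $r_{i+1}/r_i\to z_1$. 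Here $g_1\neq 0$ keeps the denominator bounded away from $0$ for large $i$, while the hypothesis $r_i\neq 0$ makes each individual ratio well defined.

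For the second limit I would sum the representation termwise. Since $|z_1|>1$ forces $z_1\neq 1$, the dominant part gives
$$\sum_{j=0}^i g_1 z_1^j = g_1\frac{z_1^{i+1}-1}{z_1-1} = \frac{g_1 z_1}{z_1-1}\,z_1^i - \frac{g_1}{z_1-1},$$
whereas each subdominant partial sum $\sum_{j=0}^i g_k(j)z_k^j$ is $o(|z_1|^i)$. Consequently $s_i = \frac{g_1 z_1}{z_1-1}\,z_1^i + o(|z_1|^i)$ and $r_i = g_1 z_1^i + o(|z_1|^i)$, so that
$$\frac{r_i}{s_i} \longrightarrow \frac{g_1}{\ \frac{g_1 z_1}{z_1-1}\ } = \frac{z_1-1}{z_1},$$
and in particular $s_i\neq 0$ for large $i$ because its leading coefficient is nonzero.

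The step I expect to be the main obstacle is the uniform control of the subdominant partial sums, i.e.\ verifying $\sum_{j=0}^i g_k(j)z_k^j = o(|z_1|^i)$ in all three regimes: $|z_k|<1$ (the series converges, giving $O(1)$), $|z_k|=1$ (the sum is $O(i^{m_k})$), and $1<|z_k|<|z_1|$ (the sum is $\asymp i^{m_k-1}|z_k|^i$). In every regime the growth is strictly slower than $|z_1|^i$ precisely because $|z_k|<|z_1|$ together with $|z_1|>1$; assembling these three cases carefully is the only genuinely technical point, the rest being the dominant-eigenvalue bookkeeping carried out above.
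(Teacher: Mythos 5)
Your proof is correct and follows essentially the same route as the paper: both extract the dominant term $g_1z_1^i$ from the Binet-type representation, divide through by $z_1^i$ for the ratio $r_{i+1}/r_i$, and sum the dominant geometric series $g_1\frac{z_1^{i+1}-1}{z_1-1}$ against subdominant partial sums for $r_i/s_i$. Your explicit three-regime verification that $\sum_{j=0}^i g_k(j)z_k^j = o(|z_1|^i)$ is in fact more careful than the paper, which merely asserts $z_k^j/z_1^i\to 0$ for $j\le i$ without addressing that the number of summands grows with $i$; this is a welcome tightening, not a different method.
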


\begin{proof}
In the case  $h>1$, $\left|z_1\right|>\left|z_k\right|$,  $\left|z_1\right|>1$, $g_1\neq0$ $(k=2,\dots, h)$.\\
As $\lim\limits_{ i\to \infty } \left({\frac{z_k}{z_1}}\right)^i =0$ and
  $\lim\limits_{ i\to \infty } {\frac{z_k^j}{z_1^i}} =0$ \ \  $(2\leq k\leq h,\  j\leq i)$, then
\begin{eqnarray*}
 \lim\limits_{ i\to \infty }{\frac{r_{i+1}}{r_i}}&=&
 \lim\limits_{ i\to\infty}{\frac{g_1z_1^{i+1}+g_2(i+1)z_2^{i+1}+\dots +g_h(i+1)z_h^{i+1}}
         {g_1z_1^i+g_2(i)z_2^i+\dots +g_h(i)z_h^i}}\\
 &=&\lim\limits_{ i\to\infty}
 {\frac{g_1z_1+g_2(i+1)z_2\left(\frac{z_2}{z_1}\right)^i +\dots + g_h(i+1)z_h\left(\frac{z_h}{z_1}\right)^i}
    {g_1+g_2(i)\left(\frac{z_2}{z_1}\right)^i +\dots + g_h(i)\left(\frac{z_h}{z_1}\right)^i}}= z_1.
\end{eqnarray*}
Furthermore
\begin{eqnarray*}
\lim\limits_{ i\to \infty }{\frac{r_i}{s_i}}&=&
 \lim\limits_{ i\to \infty }{\frac{r_i}{\sum\limits_{j=0}^ir_j}}=
 \lim\limits_{ i\to \infty }{\frac{g_1z_1^i+g_2(i)z_2^i+\dots +g_h(i)z_h^i}
 {g_1\sum\limits_{j=0}^iz_1^j + \sum\limits_{j=0}^ig_2(j)z_2^j +\dots+\sum\limits_{j=0}^ig_h(j)z_h^j}}\\
&=&\lim\limits_{ i\to \infty }{\frac{g_1z_1^i+g_2(i)z_2^i+\dots +g_h(i)z_h^i}{g_1\frac{z_1^{i+1}-1}{z_1-1}+
\sum\limits_{j=0}^ig_2(j)z_2^j +\dots+\sum\limits_{j=0}^ig_h(j)z_h^j}}\\
 &=&\lim\limits_{i \to \infty }\frac{g_1+g_2(i)\left(\frac{z_2}{z_1}\right)^i +\dots +
g_h(i)\left(\frac{z_h}{z_1}\right)^i} {g_1\frac{z_1-\frac{1}{z_1^i}}{z_1-1}+
\sum\limits_{j=0}^ig_2(j)\frac{z_2^j}{z_1^i} +\dots+\sum\limits_{j=0}^ig_h(j)\frac{z_h^j}{z_1^i}}=
\frac{z_1-1}{z_1}.
\end{eqnarray*}
\end{proof}

\begin{lemma}\label{tetel:lim_ss} If
$1< h\leq n$, $\left|z_1\right|>\left|z_k\right|\neq0$, $\left|z_1\right|>1$, $g_1\neq0$ $(k=2,\dots, h)$, then $\lim\limits_{i\to \infty }{\frac{s_{i+1}}{s_i}}=\lim\limits_{ i\to \infty }{\frac{r_{i+1}}{r_i}}$ $(i\geq1)$.
\end{lemma}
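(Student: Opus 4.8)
The plan is to reduce everything to the two limits already established in Lemma~\ref{tetel:lim_rr}, avoiding any fresh manipulation of the closed form $r_i=g_1z_1^i+\cdots +g_h(i)z_h^i$. The key observation is the elementary telescoping identity $s_{i+1}=s_i+r_{i+1}$, which links the partial-sum ratio directly to quantities whose limits are known. I would expect this lemma to be essentially a one-line corollary of its predecessor, rather than requiring a new asymptotic computation.

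Concretely, I would first write, for $i$ large enough that $s_i\neq0$,
\begin{equation*}
\frac{s_{i+1}}{s_i}=\frac{s_i+r_{i+1}}{s_i}=1+\frac{r_{i+1}}{s_i}=1+\frac{r_{i+1}}{r_i}\cdot\frac{r_i}{s_i}.
\end{equation*}
Both factors on the right are handled by Lemma~\ref{tetel:lim_rr}: under the present hypotheses it yields $\lim_{i\to\infty}\frac{r_{i+1}}{r_i}=z_1$ and $\lim_{i\to\infty}\frac{r_i}{s_i}=\frac{z_1-1}{z_1}$. Passing to the limit and using that the product of convergent sequences converges to the product of the limits,
\begin{equation*}
\lim_{i\to\infty}\frac{s_{i+1}}{s_i}=1+z_1\cdot\frac{z_1-1}{z_1}=1+(z_1-1)=z_1=\lim_{i\to\infty}\frac{r_{i+1}}{r_i},
\end{equation*}
which is exactly the asserted equality.

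The only points needing care---none of which is a genuine obstacle---are the well-definedness issues. Since $\left|z_1\right|>1$ forces $z_1\neq0$, the factorization $\frac{r_{i+1}}{s_i}=\frac{r_{i+1}}{r_i}\cdot\frac{r_i}{s_i}$ is legitimate for large $i$, where $r_i\neq0$ holds by assumption; and because $\left|z_1\right|>1$ also gives $z_1\neq1$, the limit $\frac{z_1-1}{z_1}$ is nonzero, so that the existence of $\lim_{i\to\infty}\frac{r_i}{s_i}$ already forces $s_i\neq0$ for all sufficiently large $i$ and the ratio $\frac{s_{i+1}}{s_i}$ is eventually defined. Thus the write-up should be shorter than that of Lemma~\ref{tetel:lim_rr} itself, the whole content being the telescoping identity together with the two limits supplied by the earlier result.
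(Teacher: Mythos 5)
Your proof is correct, but it follows a genuinely different route from the paper's. The paper proves this lemma by repeating the asymptotic computation of Lemma \ref{tetel:lim_rr}: it substitutes the closed form $r_j=g_1z_1^j+g_2(j)z_2^j+\dots+g_h(j)z_h^j$ into both partial sums, evaluates the geometric series in $z_1$, divides numerator and denominator by $z_1^i$, and argues that all terms of the form $\sum_{j=0}^{i}g_k(j)z_k^j/z_1^i$ with $k\geq 2$ vanish in the limit. You instead use the telescoping identity $s_{i+1}=s_i+r_{i+1}$ to write $\frac{s_{i+1}}{s_i}=1+\frac{r_{i+1}}{r_i}\cdot\frac{r_i}{s_i}$ and then invoke the two limits already supplied by Lemma \ref{tetel:lim_rr}, obtaining $1+z_1\cdot\frac{z_1-1}{z_1}=z_1$ by the algebra of limits. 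Your reduction is shorter and avoids re-verifying the dominance estimates (in particular that the polynomially weighted sums $\sum_{j\leq i}g_k(j)z_k^j$ are negligible against $z_1^i$, which the paper in effect justifies a second time), whereas the paper's version is self-contained and stylistically uniform with its predecessor. One small remark: your claim that the nonvanishing of the limit $\frac{z_1-1}{z_1}$ ``forces'' $s_i\neq 0$ eventually is phrased backwards---the eventual well-definedness of $r_i/s_i$ is part of what Lemma \ref{tetel:lim_rr} implicitly asserts, not a consequence of the value of its limit being nonzero---but the fact you actually need ($s_i\neq 0$ for all sufficiently large $i$) is indeed available from that lemma, so this does not affect the validity of your argument.
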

\begin{proof} 
It is similar to the previous cases.

\begin{eqnarray*}
 &&\lim\limits_{ i\to \infty }{\frac{s_{i+1}}{s_i}}\ = \ 
 \lim\limits_{ i\to \infty }\frac{\sum\limits_{j=0}^{i+1}{r_j}}{\sum\limits_{j=0}^{i}r_j}=
 \lim\limits_{ i\to \infty }
 \frac{g_1\sum\limits_{j=0}^{i+1}z_1^j + \sum\limits_{j=0}^{i+1}g_2(j)z_2^j +\dots+\sum\limits_{j=0}^{i+1}g_h(j)z_h^j}
 {g_1\sum\limits_{j=0}^iz_1^j + \sum\limits_{j=0}^ig_2(j)z_2^j +\dots+\sum\limits_{j=0}^ig_h(j)z_h^j}
 \\
 &=&\!\!\!\lim\limits_{ i\to \infty }\!\!\!
 \frac { g_1z_1\frac{z_1-\frac{1}{z_1^i}}{z_1-1}\!+ g_2(i+1)z_2\left(\frac{z_2}{z_1}\right)^i\! \! + \!\!\sum\limits_{j=0}^{i}g_2(j)\frac{z_2^j}{z_1^i} +\!\dots\!+  g_h(i+1)z_h\left(\frac{z_h}{z_1}\right)^i\! \!+
\!\!\sum\limits_{j=0}^{i}g_h(j)\frac{z_h^j}{z_1^i}}
 { g_1\frac{z_1-\frac{1}{z_1^i}}{z_1-1}+
\sum\limits_{j=0}^ig_2(j)\frac{z_2^j}{z_1^i} +\dots+  \sum\limits_{j=0}^ig_h(j)\frac{z_h^j}{z_1^i}}\\
  &=&\!  {z_1}=\lim\limits_{i\to \infty }\frac{r_{i+1}}{r_{i}}.
\end{eqnarray*}
\end{proof}

\section{Proof of Main theorem}

The proof is the summarising of the previous sections. For the limits 
 \RR\  and \RS\ we have to calculate the numbers of the cells in the belts. 
First, we can construct a matrix $\mathbf{K}$ for every hyperbolic regular mosaic. 
Second, from matrix $\mathbf{K}$ we get the matrix $\mathbf{G}$, where an element $g_x^y$ is the alternating sum of the products of the elements of row $x$ and column $y$ of matrix $\mathbf{K}$. 
Third, this matrix $\mathbf{G}$ generates the matrix ${\mathbf{M}}$, which contains the coefficients of the system of the linear recurrence sequences given by the matrix form $\mathbf{w}_{i+1}=\mathbf{M}\mathbf{w}_i$. 
Recall, the $y\text{th}$  coordinate ($y \in {0,1,2,\ldots,d}$; defined by \eqref{eq:wxy}) of $\mathbf{w}_i$ equals to the number of  $y$-points in the union of belts $j$ $(0\leq j\leq i)$, 
moreover, the $y\text{th}$  coordinate of the vector $\mathbf{v}_{i}=(\mathbf{M-I})\mathbf{M}^{-1}\mathbf{w}_i$ gives the number of $y$-points in belts $i$.

Finally, let $r_i=b_i^d$, the number of cells in the  belt $i$. Recall, $b_{i}^y=\mathbf{e}_y^T(\mathbf{I}-\mathbf{M}^{-1})\mathbf{w}_i$. 
The lemmas of Section~\ref{sec:lemmas} provide, that for the limits we have to give the largest eigenvalue $z_1$ of matrix ${\mathbf{M}}$.  Thus \RR\ $=z_1$ and because of the algebra of limits \RrS\ $=\lim\limits_{i\to \infty }({\frac{V_{i+1}}{V_{i}}\cdot \frac{V_{i}}{S_{i}})} =$ \RR\ $\cdot$ \RS\ $=$ $z_1\cdot \frac{z_1-1}{z_1}= z_1-1$.

The exact values and decimal approximations of the limits for hyperbolic regular mosaics can be found at the end of Introduction. In the next subsection we give and enumerate the important results of the calculations of the limits. 



\begin{remark}
Two mosaics are called dual mosaics if the orders of their Schl\"afli symbols are opposite to each other. For example, mosaics $\{4,3,5\}$ and $\{5,3,4\}$  are dual to each other. We can see that in case of dual mosaics the limits are the same (see: \cite{nem2}), because the vertices of the characteristic simplices are inverted,  $l$-points become $(d-l)$-points  $(0\leq l\leq d)$ of the dual mosaic. Thus, the number of the vertices in the belts are equal to the number of the cells in the belts  of the dual mosaic. The consequence is that the limits are the same.
\end{remark}

\begin{remark}
We shall also get the same limits if we define belt~$0$ with an arbitrary $l$-dimensional face $(0\leq l\leq d)$, then  \[\mathbf{w}_{0}=\mathbf{v}_{0}=(k_l^0\ k_{l}^{1}\ \ldots \ k_l^{l-1} \ k_l^l \ 0 \ldots \ 0)^{T}\] and if we define the sequence $r_i$, $(i\geq0)$ by the number of  any $l$-dimensional face $(0\leq l\leq d)$, so $r_i=b_i^l$ and  $s_i=w_i^l$.
\end{remark}

\subsection{Results for the mosaics}

First of all we give the matrices $\mathbf{K}$ and $\mathbf{M}$ for all the hyperbolic regular mosaics with bounded cells.

In 2-dimensional space the Schl\"afli symbols of the regular mosaics are $\{p,q\}$. If $\frac1p+\frac1q<\frac12$, they are in the 
hyperbolic plane, if  $\frac1p+\frac1q=\frac12$, they are the
Euclidean regular mosaics  $\{3,6\}$, $\{6,3\}$ and $\{4,4\}$.
The matrices $\mathbf{K}$ and $\mathbf{M}$ can always be written in the form
\begin{eqnarray*}
\mathbf{K}=
\begin{pmatrix}
1 & q & q \\
2 & 1 & 2 \\
p & p & 1 \\
\end{pmatrix},\quad
\mathbf{M}=
\begin{pmatrix}
pq-2q+1 & -2p+2  & p \\
pq-q    & -2p+1  & p \\
q       & -2     & 1 \\
\end{pmatrix}.
\end{eqnarray*}

In 3-dimensional Euclidean and hyperbolic spaces we can also obtain the matrix $\mathbf{M}$ 
in another way. The order of the transformation group that fixes the 0-point is
$V=\frac{4\pi}{\frac{\pi}{r}+\frac{\pi}{q}-\frac{\pi}{2}}=\frac{4}{\frac1r+\frac1q-\frac12}=\frac{8rq}{4-(r-2)(q-2)}$.
So the number of the simplices of $K_0$ is $V$. There is only one 0-point of $K_0$, so $k_0^0=1$. The number of characteristic simplices  having 
common 01-edge is $r$, so $k_0^1=\frac{V}{2r}$. Similarly,
$k_0^2=\frac{V}{4}$ and $k_0^3=\frac{V}{2q}$.
The number of the elements of $K_1$, $K_2$ and $K_3$ are $4r$, $4p$ and
$U=\frac{4\pi}{\frac{\pi}{p}+\frac{\pi}{q}-\frac{\pi}{2}}=\frac{4}{\frac1p+\frac1q-\frac12}=\frac{8pq}{4-(p-2)(q-2)}$,
respectively.

If $x = y$, then $k_x^y=1$.  Otherwise $k_x^y$ is equal to the ratio of 
$|K_x|$ and the number of simplices having common $xy$-edge.

Summarising we get the matrix $\mathbf{K}$ (which is the same as \eqref{eq:K}); 
\begin{eqnarray*}
\mathbf{K}=
\begin{pmatrix}
1 & \frac{V}{2r} & \frac{V}{4} & \frac{V}{2q}\\
2 & 1 & r & r \\
p & p & 1 & 2 \\
\frac{U}{2q} & \frac{U}{4} & \frac{U}{2p} & 1 \\
\end{pmatrix}.
\end{eqnarray*}

Details for regular mosaics in 3-dimensional and 4-dimensional spaces are presented below. 

\noindent Mosaic $\{4,3,5\}$:\\
\phantom{x}\hspace{0.4cm}
$\mathbf{K}=\begin{pmatrix}
1 &  12  & 30 & 20 \\
2 & 1& 5 &  5 \\
4 & 4 & 1 &  2 \\
8 &  12 & 6 & 1 \\
\end{pmatrix}$,\ 
$\mathbf{M}=\begin{pmatrix}
63 &  -22 & 12 & -8 \\
132 & -41 & 20 & -12 \\
90 & -25 & 11 & -6 \\
20 &  -5 & 2 & -1 \\
\end{pmatrix}$.
\medskip

\noindent Mosaic $\{5,3,4\}$:\\
\phantom{x}\hspace{0.4cm}
$\mathbf{K}=\begin{pmatrix}
1 &  6  & 12 & 8 \\
2 & 1& 4 & 4  \\
5 & 5  &  1 &  2 \\
20 & 30   & 12  & 1 \\
\end{pmatrix}$,
$\mathbf{M}=\begin{pmatrix}
111 &  -62 & 35 & -20 \\
186 & -101 & 55 & -30 \\
84 & -44 & 23 & -12 \\
8 &  -4 & 2 & -1 \\
\end{pmatrix}$.
\medskip

\noindent Mosaic $\{3,5,3\}$:\\
\phantom{x}\hspace{0.4cm}
$\mathbf{K}=\begin{pmatrix}
1 &  20 & 30 &  12\\
2 &  1  & 3  &  3 \\
3 &  3  &  1 &  2 \\
12 &  30 & 20 & 1 \\
\end{pmatrix}$,
$\mathbf{M}=\begin{pmatrix}
93 &  -29 & 21 & -12 \\
290 & -82 & 57 & -30 \\
210 & -57 & 39 & -20 \\
12 &  -3 & 2 & -1 \\
\end{pmatrix}$.
\medskip

\noindent Mosaic $\{5,3,5\}$:\\
\phantom{x}\hspace{0.4cm}
$\mathbf{K}=\begin{pmatrix}
1 & 12& 30& 20 \\
2 & 1 & 5 & 5 \\
5 & 5 & 1 & 2 \\
20& 30&12 & 1 \\
\end{pmatrix}$,
$\mathbf{M}=\begin{pmatrix}
	273 &  -77 & 35 & -20 \\
	462 & -126 & 55 & -30 \\
	210 & -55 & 23 & -12 \\
	20 &  -5 & 2 & -1 \\
\end{pmatrix}$.
\bigskip

\noindent Mosaic $\{4,3,3,5\}$:\\
\phantom{x}\hspace{0.4cm}$\mathbf{K}=\begin{pmatrix}
1 & 120 & 720 & 1200 & 600\\
2 & 1  &   12 & 30 & 20\\
4 & 4  &    1 & 5 & 5\\
8 & 12 &   6 & 1 & 2\\
16 & 32 &  24 & 8 & 1\\
\end{pmatrix}$,
$\mathbf{M}=\begin{pmatrix}
2641 &  -126 & 44 & -24 & 16\\
7560 & -327 & 104 & -52 & 32\\
7920 & -312 & 91 & -42 & 24\\
3600 &  -130 & 35 & -15 & 8\\
600 &   -20 &  5 &  -2 & 1\\
\end{pmatrix}$.

\medskip 
\noindent Mosaic $\{5,3,3,4\}$:\\
\phantom{x}\hspace{0.4cm} $\mathbf{K}=\begin{pmatrix}
1 & 8 & 24 & 32 & 16\\
2 & 1  &   6 & 12 & 8\\
5 & 5  &    1 & 4 & 4\\
20 & 30 &   12 & 1 & 2\\
600 & 1200 &  720 & 120 & 1\\
\end{pmatrix}$,
$\mathbf{M}=\begin{pmatrix}
9065 &  -4588 & 2325& -1180 & 600\\
18352 & -9269 & 4685 & -2370 & 1200\\
11160 & -5622 & 2833 & -1428 & 720\\
1888 &  -948 & 476 & -239 & 120\\
16 &   -8 &  4 &  -2 & 1\\
\end{pmatrix}$.

\medskip 
\noindent Mosaic $\{3,3,3,5\}$:\\
\phantom{x}\hspace{0.4cm} $\mathbf{K}=\begin{pmatrix}
1 & 120 & 720 & 1200 & 600\\
2 & 1  &   12 & 30 & 20\\
3 & 3  &    1 & 5 & 5\\
4 & 6 &   4 & 1 & 2\\
5 & 10 &  10 & 5 & 1\\
\end{pmatrix}$,
$\mathbf{M}=\begin{pmatrix}
121 &  -14 & 8 & -6 & 5\\
840 & -55 & 23 & -14 & 10\\
1920 & -92 & 31 & -16 & 10\\
1800 &  -70 & 20 & -9 & 5\\
600 &   -20 &  5 &  -2 & 1\\
\end{pmatrix}$.

\medskip
\noindent Mosaic $\{5,3,3,3\}$:\\
\phantom{x}\hspace{0.4cm} $\mathbf{K}=\begin{pmatrix}
1 & 5 & 10 & 10 & 5\\
2 & 1  &   4 & 6 & 4\\
5 & 5  &    1 & 3 & 3\\
20 & 30 &   12 & 1 & 2\\
600 & 1200 &  720 & 120 & 1\\
\end{pmatrix}$,
$\mathbf{M}=\begin{pmatrix}
2841 &  -2298 & 1745& -1180 & 600\\
5745 & -4639 & 3515 & -2370 & 1200\\
3490 & -2812 & 2125 & -1428 & 720\\
590 &  -474 & 357 & -239 & 120\\
5 &   -4 &  3 &  -2 & 1\\
\end{pmatrix}$.

\medskip
\noindent Mosaic $\{5,3,3,5\}$:\\
\phantom{x}\hspace{0.4cm} $\mathbf{K}=\begin{pmatrix}
1 & 120 & 720 & 1200 & 600\\
2 & 1  &   12 & 30 & 20\\
5 & 5  &    1 & 5 & 5\\
20 & 30 &   12 & 1 & 2\\
600 & 1200 &  720 & 120 & 1\\
\end{pmatrix}$,
$\mathbf{M}=\begin{pmatrix}
339361 &  -11458 & 2905 & -1180 & 600\\
687480 & -23159 & 5855 & -2370 & 1200\\
418320 & -14052 & 3541 & -1428 & 720\\
70800 &  -2370 & 595 & -239 & 120\\
600 &   -20 &  5 &  -2 & 1\\
\end{pmatrix}$. 

\medskip

Now we summarise  the eigenvalues and the (rounded) values of $g_1$ for all the mosaics for the cases where the belt~$0$ is  a cell (or a vertex). The values of $g_1$s are in brackets.  They all satisfy the conditions of  Lemmas \ref{tetel:lim_rr} and \ref{tetel:lim_ss}. The calculation was made using Maple software. 

\medskip

\noindent $\{4,3,5\}$: $15\pm 4\sqrt{14}$,  1, 1;  3.8571, (0.8304).

\noindent $\{5,3,4\}$: $15\pm 44\sqrt{14}$, 1, 1;  2.1429, (0.3322).

\noindent $\{5,3,5\}$: $\frac{167}{2}\pm \frac{13}{2}\sqrt{165}$,  1, 1;  1.6364, (0.1266).

\noindent $\{3,5,3\}$: $\frac{47}{2}\pm \frac{21}{2}\sqrt{5}$,  1, 1; 2, (0.2918).

\noindent $\{3,3,3,5\}$: $22\pm\sqrt{401}\pm 2\sqrt{221+11\sqrt{401}}$, 1;  117.6044, (8.8448).

\noindent $\{5,3,3,3\}$: $22\pm\sqrt{401}\pm 2\sqrt{221+11\sqrt{401}}$, 1;  1.4629, (0.0737).

\noindent $\{4,3,3,5\}$: $\frac12({1195\pm 11\sqrt{11641}\pm\sqrt{2836582+26290\sqrt{11641}}})$, 1;  3.8242, (0.2584).

\noindent $\{5,3,3,4\}$: $\frac12({1195\pm 11\sqrt{11641}\pm\sqrt{2836582+26290\sqrt{11641}}})$, 1;  1.1090, (0.0069).

\noindent $\{5,3,3,5\}$:  $79876\pm 3135\sqrt{649}\pm 2\sqrt{3189673350+125205630\sqrt{649}}$, 1;   1.0622, (0.0019). 

\medskip

\noindent $\{p,q\}$: $z_1=\frac{c+\sqrt{c^2-4}}{2}>1$, $1=\frac{c-(c-2)}{2}=\frac{c-\sqrt{(c-2)^2}}{2}> z_2=\frac{c-\sqrt{c^2-4}}{2}>0$, $z_3=1$,  where $c=(p-2)(q-2)-2>4$; Since $r_1=g_1z_1+g_2z_2+g_3$, $r_2=g_1z_1^2+g_2z_2^2+g_3$, $r_3=g_1z_1^3+g_2z_2^3+g_3$ and
$r_1-r_2+r_3-r_2=g_1z_1(1-z_1+z_1^2-z_1)+g_2z_2(1-z_2+z_2^2-z_2)=g_1z_1(1-z_1)^2+g_2z_2(1-z_2)^2>0$, then
 $g_1=\frac{z_2(r_1-r_2)+(r_3-r_2)}{z_1(z_1-z_2)(z_1-1)}>\frac{(r_1-r_2)+(r_3-r_2)}{z_1(z_1-z_2)(z_1-1)}>0.$

\end{document}